\def\thesection{\arabic{section}}
\def\theequation{\thesection.\arabic{equation}}
\newcommand{\Om} {\Omega}
\newcommand{\noi} {\noindent}
\newcommand{\na} {\nabla}
\markboth{\small } {\small Mixed local-nonlocal Sobolev type inequality in the Heisenberg group}
\def\theequation{\@arabic{\c@section}.\@arabic{\c@equation}}
\newtheorem{Theorem}{Theorem}[section]
\newtheorem{Lemma}[Theorem]{Lemma}
\newtheorem{Corollary}[Theorem]{Corollary}
\newtheorem{Remark}[Theorem]{Remark}
\newtheorem{Definition}[Theorem]{Definition}
\begin{document}

{\vspace{0.01in}}

\title{On mixed local–nonlocal Sobolev-type inequalities and their connection with singular equations in the Heisenberg group}

\author{Prashanta Garain}

\maketitle

\begin{abstract}\noindent
In this work, we establish a mixed local--nonlocal Sobolev-type inequality in the Heisenberg group and demonstrate that its extremals coincide with solutions to the corresponding mixed local--nonlocal singular $p$-Laplace equations. We further show that these inequalities serve as a necessary and sufficient condition for the existence of weak solutions to the associated singular problems. Notably, the same characterization remains valid in both the purely local and purely nonlocal settings. Our results thus provide a unified framework linking the existence theory for singular equations across local, nonlocal, and mixed regimes.
\end{abstract}

\maketitle

\noi {Keywords: Sobolev type inequality, extremal, mixed local-nonlocal singular problem, Heisenberg group.}

\noi{\textit{2020 Mathematics Subject Classification: 35A23, 35H20, 35J92, 35R11,  35J75.}

\bigskip

\tableofcontents

\section{Introduction}
In the Euclidean setting, Sobolev-type inequalities connected to singular elliptic problems have been recently investigated in the local~\cite{GFA}, nonlocal~\cite{Nmn}, and mixed local--nonlocal frameworks~\cite{GU21}. More precisely, in~\cite{GFA} the authors proved that, for a bounded smooth domain $\Omega \subset \mathbb{R}^N$ and parameters $0<\delta<1<p<\infty$, the quantity
\begin{equation}\label{nu-def}
\mu(\Omega)
:= 
\inf_{v \in W^{1,p}_0(\Omega)\setminus\{0\}}
\left\{
\int_{\Omega} |\nabla v|^{p}\,dx
\;:\;
\int_{\Omega} |v|^{1-\delta}\,dx = 1
\right\}
\tag{1.4}
\end{equation}
is attained by some $u \in W^{1,p}_0(\Omega)$, which is a positive solution to the singular $p$-Laplace equation
\[
-\Delta_p u:=\text{div}(|\na u|^{p-2}\na u) = \mu(\Omega)\,u^{-\delta} \quad \text{in } \Omega,
\qquad
u>0 \ \text{in } \Omega,
\qquad
u=0 \ \text{on } \partial\Omega.
\]
The corresponding anisotropic setting has been addressed in~\cite{BGmm22, BGdie}. In the nonlocal framework, the authors of~\cite{Nmn} established that for any $0<\delta<1$, $0<s<1<p<\infty$, and for a given nonnegative function $f \in L^{m}(\Omega)\setminus\{0\}$ with $m \ge 1$, the quantity
\begin{equation}\label{zeta-def}
\nu(\Omega)
:=
\inf_{v\in W^{s,p}_0(\Omega)\setminus\{0\}}
\left\{
\int_{\mathbb{R}^N}\!\int_{\mathbb{R}^N}
\frac{|v(x)-v(y)|^{p}}{|x-y|^{N+ps}}\,dx\,dy
\;:\;
\int_{\Omega} |v|^{1-\delta} f\, dx = 1
\right\}
\end{equation}
is achieved at some $u \in W^{s,p}_0(\Omega)$, which satisfies the singular fractional $p$-Laplace equation
\begin{equation}\label{frac-sing-eq}
(-\Delta_p)^s u = \nu(\Omega)\, f(x)\, u^{-\delta}
\quad \text{in } \Omega, \qquad
u>0 \ \text{in } \Omega, \qquad
u=0 \ \text{in } \mathbb{R}^N \setminus \Omega.
\end{equation}
Here, $(-\Delta_p)^s$ denotes the fractional $p$-Laplace operator, defined by
\[
(-\Delta_p)^s u(x)
:= \mathrm{P.V.}\int_{\mathbb{R}^N}
\frac{|u(x)-u(y)|^{p-2}\big(u(x)-u(y)\big)}{|x-y|^{N+ps}}\,dy,
\]
where P.V. stands for the principal value.
In a recent development, the authors in~\cite{GU21} proved that the infimum
\begin{equation}\label{theta-def}
\theta(\Omega)
:= \inf_{v \in W_0^{1,p}(\Omega) \setminus \{0\}}
\left\{
\int_{\Omega} |\nabla v|^{p}\,dx
\;+\;
\int_{\mathbb{R}^N}\!\int_{\mathbb{R}^N}
\frac{|v(x)-v(y)|^{p}}{|x-y|^{N+sp}}\,dx\,dy
\,:\,
\int_{\Omega} |v|^{1-\delta} f\,dx = 1
\right\},
\end{equation}
is attained by a function $u$ solving the mixed local--nonlocal $p$-Laplace equation
\begin{equation}\label{Meq}
-\Delta_p u + (-\Delta_p)^s u
= \theta(\Omega)\, f(x)\, u^{-\delta}
\quad \text{in } \Omega, \qquad
u>0 \ \text{in } \Omega, \qquad
u=0 \ \text{in } \mathbb{R}^N\setminus\Omega.
\end{equation}
Furthermore, the mixed anisotropic setting has been addressed in~\cite{Gjms}. We also note that singular elliptic problems have been extensively studied over the past decades, resulting in a vast body of literature. For the purely local case, we refer to~\cite{Boc-Or, Canino, CRT}, while the purely nonlocal case is treated in~\cite{Nsop, Caninoetal, EP} and the references therein. More recently, mixed local--nonlocal singular problems have been investigated in~\cite{ARad, Vecchi, Garainjga, GU21} among others.

In non-Euclidean settings, Sobolev-type inequalities of this nature have only recently begun to be understood. For the purely local case in Carnot groups, see~\cite{GUaamp}, and for the purely nonlocal case, we refer to~\cite{GarainHein}. However, to the best of our knowledge, the mixed local--nonlocal case remains unexplored. The primary objective of this article is to bridge this gap.

More precisely, we establish that for $\alpha>0$ and a given nonnegative function $f \in L^{m}(\Omega)\setminus\{0\}$ with $m \ge 1$, the quantity
\begin{equation}\label{Theta}
\Theta(\Omega)
:= \inf_{v \in HW_0^{1,p}(\Omega)\setminus\{0\}}
\left\{
\int_{\Omega} |\nabla_H v|^{p}\,dx
+ 
\alpha \int_{\mathbb{H}^N}\!\int_{\mathbb{H}^N}
\frac{|v(x)-v(y)|^{p}}{|y^{-1}\circ x|^{Q+sp}}\,dx\,dy
:\;
\int_{\Omega} |v|^{1-\delta} f\,dx = 1
\right\},
\end{equation}
is associated to a function $u \in HW_0^{1,p}(\Omega)$ solving the mixed local--nonlocal singular problem
\begin{equation}\label{meqn}
M_\alpha u := -\Delta_{H,p} u + \alpha\, (-\Delta_{H,p})^s u
= f(x)\, u^{-\delta}
\quad \text{in } \Omega,
\qquad
u>0 \ \text{in } \Omega,
\qquad
u=0 \ \text{in } \mathbb{H}^N \setminus \Omega,
\end{equation}
where $0<\delta<1 $ with $0<s<1<p<Q$ and $Q = 2N+2$ denotes the homogeneous dimension of the Heisenberg group $\mathbb{H}^N$. Here,
$$
\Delta_{H,p} u=\text{div}(|\na_H u|^{p-2}\na_H u)
$$
is the $p$-subLaplace operator and
\[
(-\Delta_{H,p})^s u(x)
:= \mathrm{P.V.}\int_{\mathbb{H}^N}
\frac{|u(x)-u(y)|^{p-2}\big(u(x)-u(y)\big)}{|y^{-1}\circ x|^{Q+ps}}\,dy
\]
is the fractional $p$-subLaplace operator.

Moreover, we prove that the mixed local--nonlocal Sobolev-type inequality is both necessary and sufficient for the existence of solutions to~\eqref{meqn}. Analogous characterizations hold separately in the purely local and purely nonlocal cases. As an immediate consequence, we deduce that the purely singular $p$-Laplace equation
\begin{equation}\label{plh}
-\Delta_{H,p} u = f(x)\, u^{-\delta}
\quad \text{in } \Omega,
\qquad
u>0 \ \text{in } \Omega,
\qquad
u=0 \ \text{on } \partial\Omega,
\end{equation}
admits a solution if and only if the mixed local--nonlocal singular $p$-Laplace equation~\eqref{meqn} has a solution for every $\alpha>0$.

To establish our main results, we adopt the methodologies developed in~\cite{GFA, Nmn, GU21}. This necessitates a detailed analysis of the corresponding singular mixed problem. We highlight that mixed local--nonlocal problems in non-Euclidean settings remain largely unexplored, even in the nonsingular case. For recent progress in this direction, we refer to~\cite{DC, Zhang, Zhang2}, where existence and regularity results for nonsingular mixed equations were obtained. Very recently, singular mixed problems in the Heisenberg group have been investigated in~\cite{G1}.

\subsection{Functional setting}
In this subsection, we briefly recall some fundamental properties of the Heisenberg group $\mathbb{H}^N$ and introduce the function spaces relevant to our analysis.

The Euclidean space $\mathbb{R}^{2N+1}$, equipped with the group law
\[
\xi \circ \eta =
\left(
x_1 + y_1,\,
x_2 + y_2,\,
\ldots,\,
x_{2N} + y_{2N},\,
\tau + \tau'
+ \frac{1}{2}\sum_{i=1}^{N}\big( x_i y_{N+i} - x_{N+i} y_i \big)
\right),
\]
where $\xi = (x_1,\ldots,x_{2N},\tau)$ and $\eta = (y_1,\ldots,y_{2N},\tau') \in \mathbb{R}^{2N+1}$, constitutes the Heisenberg group $\mathbb{H}^N$.

The left-invariant vector fields on $\mathbb{H}^N$ are given by
\[
X_i = \partial_{x_i} - \frac{x_{N+i}}{2}\,\partial_\tau,
\qquad
X_{N+i} = \partial_{x_{N+i}} + \frac{x_i}{2}\,\partial_\tau,
\qquad 1 \le i \le N,
\]
and their only nontrivial commutator is
\[
T = \partial_\tau = [X_i, X_{N+i}]
= X_i X_{N+i} - X_{N+i} X_i,
\qquad 1 \le i \le N.
\]
We refer to $X_1, X_2, \ldots, X_{2N}$ as the \emph{horizontal} vector fields on $\mathbb{H}^N$, and to $T$ as the \emph{vertical} vector field.

The Haar measure on $\mathbb{H}^N$ coincides with the Lebesgue measure on $\mathbb{R}^{2N+1}$. For any measurable set $E \subset \mathbb{H}^N$, its Lebesgue measure will be denoted by $|E|$.

For a point $\xi = (x_1,\ldots,x_{2N},\tau)$, we introduce the Kor\'anyi-type norm
\[
|\xi|
= \left( \left( \sum_{i=1}^{2N} x_i^2 \right)^{2} + \tau^{2} \right)^{1/4}.
\]
The Carnot--Carath\'eodory distance between two points $\xi,\eta \in \mathbb{H}^N$ is defined as the infimum of the lengths of horizontal curves joining them and is denoted by $d(\xi,\eta)$. This distance is equivalent to the Kor\'anyi metric:
\[
d(\xi,\eta) \sim |\xi^{-1} \circ \eta|.
\]

The ball of radius $r>0$ centered at $\xi_0$, with respect to the distance $d$, is given by
\[
B_r(\xi_0)
= \{\xi \in \mathbb{H}^N : d(\xi,\xi_0) < r\}.
\]
When the center is unimportant or clear from context, we simply write $B_r := B_r(\xi_0)$.

The homogeneous dimension of $\mathbb{H}^N$ is $Q = 2N + 2$.  
For $1 \le p < \infty$ and an open set $\Omega \subset \mathbb{H}^N$, the Sobolev space
\[
HW^{1,p}(\Omega)
=
\left\{
u \in L^{p}(\Omega) :
\nabla_H u \in L^{p}(\Omega)
\right\},
\]
is defined via the horizontal gradient
\[
\nabla_H u = (X_1 u, X_2 u, \ldots, X_{2N} u),
\]
and equipped with the norm
\[
\|u\|_{HW^{1,p}(\Omega)}
=
\|u\|_{L^{p}(\Omega)}
+
\|\nabla_H u\|_{L^{p}(\Omega)}.
\]
With this norm, $HW^{1,p}(\Omega)$ becomes a Banach space.
The local Sobolev space $HW_{\mathrm{loc}}^{1,p}(\Omega)$ is defined by
\[
HW_{\mathrm{loc}}^{1,p}(\Omega)
=
\left\{
u : u \in HW^{1,p}(\Omega') \ \text{for every open set } \Omega' \Subset \Omega
\right\}.
\]

Let $1 \le p < \infty$, $s \in (0,1)$, and let $v:\mathbb{H}^N \to \mathbb{R}$ be a measurable function.  
The associated Gagliardo seminorm is given by
\[
[v]_{HW^{s,p}(\mathbb{H}^N)}
=
\left(
\int_{\mathbb{H}^N}\int_{\mathbb{H}^N}
\frac{|v(x)-v(y)|^{p}}
{|y^{-1}\circ x|^{Q+sp}}
\, dx\, dy
\right)^{1/p}.
\]

The fractional Sobolev space on the Heisenberg group is defined as
\[
HW^{s,p}(\mathbb{H}^N)
=
\left\{
v \in L^{p}(\mathbb{H}^N) :
[v]_{HW^{s,p}(\mathbb{H}^N)} < \infty
\right\},
\]
and it is endowed with the natural norm
\[
\|v\|_{HW^{s,p}(\mathbb{H}^N)}
=
\left(
\|v\|_{L^{p}(\mathbb{H}^N)}^{p}
+
[v]_{HW^{s,p}(\mathbb{H}^N)}^{p}
\right)^{1/p}.
\]

For any open set $\Omega \subset \mathbb{H}^N$, the fractional Sobolev space $HW^{s,p}(\Omega)$ and its associated norm 
$\|v\|_{HW^{s,p}(\Omega)}$ are defined analogously to the whole-space case.

To study the mixed problem~\eqref{meqn} with $\alpha>0$, we consider the space
\[
HW_0^{1,p}(\Omega) = \{ u \in HW^{1,p}(\mathbb{H}^N) : u = 0 \ \text{in } \mathbb{H}^N \setminus \Omega \}.
\]

The following result is a direct consequence of \cite[Theorem 2.4]{DC}.

\begin{Lemma}\label{l2.3}
Let $\Omega$ be a bounded domain in $\mathbb{H}^N$, and let $1 < p < \infty$ and $0 < s < 1$.  
Then there exists a constant $C = C(Q,p,s,\Omega) > 0$ such that
\[
\iint_{\mathbb{H}^N \times \mathbb{H}^N}
\frac{|u(x)-u(y)|^{p}}{|x-y|^{Q+ps}}\, dx\, dy
\le
C \int_{\Omega} |\nabla_H u(x)|^{p}\, dx,
\]
for every $u \in HW^{1,p}_0(\Omega)$.
\end{Lemma}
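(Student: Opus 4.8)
The plan is to prove the stronger, domain-free inequality
\[
\iint_{\mathbb{H}^N\times\mathbb{H}^N}\frac{|u(x)-u(y)|^{p}}{|y^{-1}\circ x|^{Q+ps}}\,dx\,dy\le C\Big(\|\nabla_H u\|_{L^{p}(\mathbb{H}^N)}^{p}+\|u\|_{L^{p}(\mathbb{H}^N)}^{p}\Big)\qquad\text{for all }u\in HW^{1,p}(\mathbb{H}^N),
\]
and then, for $u\in HW_0^{1,p}(\Omega)$ with $\Omega$ bounded, to absorb the lower-order term $\|u\|_{L^{p}(\mathbb{H}^N)}^{p}=\|u\|_{L^{p}(\Omega)}^{p}$ by the Poincar\'e inequality $\|u\|_{L^{p}(\Omega)}\le C(Q,p,\Omega)\|\nabla_H u\|_{L^{p}(\Omega)}$ on the bounded set $\Omega$, which itself follows from the Sobolev inequality on $\mathbb{H}^N$ and H\"older's inequality since $p<Q$. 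Here, and in the lemma, $|x-y|$ is understood as the Kor\'anyi norm $|y^{-1}\circ x|\sim d(x,y)$, as in the definition of the seminorm. Since standard mollification on $\mathbb{H}^N$ (left group convolution commutes with the $X_j$) produces smooth functions converging in $HW^{1,p}(\mathbb{H}^N)$ and, up to a subsequence, a.e., Fatou's lemma reduces the domain-free inequality to the case $u\in C^{\infty}(\mathbb{H}^N)\cap HW^{1,p}(\mathbb{H}^N)$.

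Next I would use the bi-invariance of the Haar measure together with the symmetry $|z^{-1}|=|z|$ of the Kor\'anyi norm: for fixed $x$ the substitution $y=x\circ w$ gives $|y^{-1}\circ x|=|w^{-1}|=|w|$ and $dy=dw$, hence
\[
\iint_{\mathbb{H}^N\times\mathbb{H}^N}\frac{|u(x)-u(y)|^{p}}{|y^{-1}\circ x|^{Q+ps}}\,dx\,dy=\int_{\mathbb{H}^N}\frac{g(w)}{|w|^{Q+ps}}\,dw,\qquad g(w):=\int_{\mathbb{H}^N}|u(x\circ w)-u(x)|^{p}\,dx .
\]
Split the $w$-integral into $\{|w|>1\}$ and $\{|w|\le 1\}$. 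On the far region, the elementary bound $g(w)\le 2^{p}\|u\|_{L^{p}(\mathbb{H}^N)}^{p}$ (right-translation invariance of the measure) together with $Q+ps>Q$ yields $\int_{\{|w|>1\}}|w|^{-Q-ps}g(w)\,dw\le C\|u\|_{L^{p}(\mathbb{H}^N)}^{p}$.

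The core is the near-diagonal estimate $g(w)\le C|w|^{p}\|\nabla_H u\|_{L^{p}(\mathbb{H}^N)}^{p}$. Fix $w$ and pick a horizontal, arclength-parametrized curve $\gamma\colon[0,\ell]\to\mathbb{H}^N$ from the identity to $w$ with $\ell=d(e,w)\le C|w|$ (equivalence of the Carnot--Carath\'eodory and Kor\'anyi metrics). By left invariance of the horizontal distribution $t\mapsto x\circ\gamma(t)$ is a horizontal curve from $x$ to $x\circ w$, so for smooth $u$ the fundamental theorem of calculus along it gives $|u(x\circ w)-u(x)|\le\int_0^{\ell}|\nabla_H u(x\circ\gamma(t))|\,dt$. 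Raising to the power $p$, using H\"older's inequality in $t$, and then Fubini together with the measure-preserving property of right translation,
\[
g(w)\le \ell^{p-1}\int_0^{\ell}\int_{\mathbb{H}^N}|\nabla_H u(x\circ\gamma(t))|^{p}\,dx\,dt=\ell^{p}\,\|\nabla_H u\|_{L^{p}(\mathbb{H}^N)}^{p}\le C|w|^{p}\|\nabla_H u\|_{L^{p}(\mathbb{H}^N)}^{p}.
\]
Since $Q-(1-s)p<Q$, the integral $\int_{\{|w|\le1\}}|w|^{p-Q-ps}\,dw=\int_{\{|w|\le1\}}|w|^{-(Q-(1-s)p)}\,dw$ is finite, so $\int_{\{|w|\le1\}}|w|^{-Q-ps}g(w)\,dw\le C\|\nabla_H u\|_{L^{p}(\mathbb{H}^N)}^{p}$. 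Adding the two regions gives the domain-free inequality, and the Poincar\'e inequality on $\Omega$ removes the $\|u\|_{L^p}$ term, proving the lemma.

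The step I expect to require the most care is the near-diagonal one: one must justify the horizontal fundamental theorem of calculus along $\gamma$ and, to apply Fubini, ensure measurability in $w$ since $\gamma=\gamma_w$ depends on $w$. For smooth $u$ the first point is immediate, while the measurability can be handled either by choosing $\gamma_w$ measurably (e.g.\ via an explicit one-parameter family of Heisenberg geodesics) or by passing to a dyadic decomposition $2^{-k-1}<|w|\le 2^{-k}$ and only using the uniform bound $g(w)\le C2^{-kp}\|\nabla_H u\|_{L^{p}(\mathbb{H}^N)}^{p}$ on each annulus. All remaining manipulations are routine; alternatively, as noted in the excerpt, the inequality is an immediate consequence of the embedding in \cite[Theorem 2.4]{DC}.
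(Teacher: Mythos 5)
Your argument is correct, but it is a genuinely different route from the paper's: the paper gives no proof at all for Lemma~\ref{l2.3}, simply recording it as a direct consequence of \cite[Theorem 2.4]{DC}, whereas you supply a self-contained derivation. Your proof is the standard one for the embedding $HW^{1,p}\hookrightarrow HW^{s,p}$ transplanted to the group setting --- change of variables $y=x\circ w$ using unimodularity, a near/far splitting of the $w$-integral, the horizontal fundamental theorem of calculus along a Carnot--Carath\'eodory near-geodesic for the bound $g(w)\le C|w|^{p}\|\nabla_H u\|_{L^p}^{p}$, and a Poincar\'e inequality to absorb the $L^p$ term --- and every step checks out; in particular $g$ is manifestly measurable in $w$ by Tonelli, so the measurability worry you raise about $\gamma_w$ is moot (the curve only enters a pointwise bound for each fixed $w$). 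What your approach buys is transparency and independence from the reference; what it costs is two small caveats you should make explicit. First, the lemma is stated for all $1<p<\infty$, but your Poincar\'e step as written invokes the Sobolev inequality ``since $p<Q$''; either restrict to the paper's standing assumption $1<p<Q$ (which is all that is ever used) or note that the Poincar\'e inequality for $HW^{1,p}_0(\Omega)$ on a bounded domain holds for every $p\ge 1$ on Carnot groups. Second, the reduction to smooth $u$ should cite the Meyers--Serrin/Friedrichs-type density of smooth functions in horizontal Sobolev spaces rather than a literal commutation of group convolution with the $X_j$ (left-invariant fields commute with convolution only on one side), though the density result itself is standard and the reduction is sound.
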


For the following embedding result, we refer to \cite[Theorem 8.1]{Koskela}.

\begin{Lemma}\label{emb}
Let $0 < s < 1$ and $1 < p < Q$. Then the embedding
\[
HW^{1,p}(\Omega) \hookrightarrow L^r(\Omega)
\]
is continuous for every $r \in [1,p^*]$ and compact for every $r \in [1,p^*)$.
\end{Lemma}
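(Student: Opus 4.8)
The plan is to reduce the statement to the borderline exponent $r=p^*=\tfrac{Qp}{Q-p}$, which is the genuine Folland--Stein--Sobolev embedding, and then to propagate it to the full range $r\in[1,p^*]$ using the finiteness of $|\Omega|$ together with an interpolation argument.

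First I would record the continuous embedding into $L^{p^*}(\Omega)$. The Folland--Stein inequality on $\mathbb{H}^N$ gives $\|u\|_{L^{p^*}(\mathbb{H}^N)}\le S(Q,p)\,\|\nabla_H u\|_{L^p(\mathbb{H}^N)}$ for all $u\in C_c^\infty(\mathbb{H}^N)$, hence for all $u\in HW_0^{1,p}(\mathbb{H}^N)$. For $u\in HW^{1,p}(\Omega)$ on the bounded domain $\Omega$ one applies this to an extension $Eu\in HW^{1,p}(\mathbb{H}^N)$ (the existence of such a bounded extension operator, and the corresponding global Sobolev--Poincar\'e estimate, is exactly what is packaged in the cited \cite[Theorem 8.1]{Koskela}), so that
\[
\|u\|_{L^{p^*}(\Omega)}\le \|Eu\|_{L^{p^*}(\mathbb{H}^N)}\le C\,\|Eu\|_{HW^{1,p}(\mathbb{H}^N)}\le C'\,\|u\|_{HW^{1,p}(\Omega)}.
\]
Since $|\Omega|<\infty$, H\"older's inequality then yields $L^{p^*}(\Omega)\hookrightarrow L^r(\Omega)$ continuously for every $r\in[1,p^*]$, which is the continuity assertion.

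For the compactness I would take a sequence $(u_n)$ bounded in $HW^{1,p}(\Omega)$ and first establish relative compactness in $L^p(\Omega)$ by a Riesz--Fr\'echet--Kolmogorov argument adapted to the group: the $HW^{1,p}$ bound controls the left-translation modulus $\|u_n(\cdot\circ h)-u_n\|_{L^p}$ uniformly for small $h$ --- this uses that points of $\mathbb{H}^N$ can be joined by horizontal curves, so that a translation estimate in terms of $\|\nabla_H u_n\|_{L^p}$ holds --- while boundedness of $\Omega$ gives uniform tightness; hence a subsequence converges strongly in $L^p(\Omega)$ (this is precisely the Rellich--Kondrachov part of \cite[Theorem 8.1]{Koskela}). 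Along this subsequence, for $r\in[p,p^*)$ write $\tfrac1r=\tfrac{1-\theta}{p}+\tfrac{\theta}{p^*}$ with $\theta\in[0,1)$ and combine the interpolation inequality with the uniform $L^{p^*}$ bound from the first step:
\[
\|u_n-u_m\|_{L^r(\Omega)}\le \|u_n-u_m\|_{L^p(\Omega)}^{1-\theta}\,\|u_n-u_m\|_{L^{p^*}(\Omega)}^{\theta}\le C\,\|u_n-u_m\|_{L^p(\Omega)}^{1-\theta},
\]
so $(u_n)$ is Cauchy in $L^r(\Omega)$; for $r\in[1,p)$ the conclusion is immediate from H\"older and $L^p$-convergence. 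This gives compactness for all $r\in[1,p^*)$.

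The main obstacle is the sub-Riemannian input itself: the Folland--Stein inequality and, above all, the translation estimate behind the Rellich-type compactness, since left translations on $\mathbb{H}^N$ interact noncommutatively with the horizontal fields. Both are classical (and are exactly the content of \cite[Theorem 8.1]{Koskela}), so in practice the argument reduces to bookkeeping: invoke the endpoint embedding and the $L^p$-compactness, then extend by H\"older for $r\le p$ and by interpolation for $p\le r<p^*$. One should also keep in mind that $\Omega$ must be of the type for which the extension/Poincar\'e machinery is available --- a bounded domain, as elsewhere in the paper --- since for arbitrary open sets such embeddings may fail.
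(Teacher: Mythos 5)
The paper offers no proof of this lemma at all: it is imported verbatim as a consequence of \cite[Theorem 8.1]{Koskela}, so there is nothing internal to compare your argument against. Your outline is the standard and correct one --- Folland--Stein at the endpoint $r=p^*$, H\"older on the bounded domain to cover $r\le p^*$, an $L^p$-compactness step of Riesz--Fr\'echet--Kolmogorov/Rellich type driven by the horizontal translation estimate $\|u(\cdot\circ h)-u\|_{L^p}\le d_{cc}(0,h)\,\|\nabla_H u\|_{L^p}$ (which does hold, by integrating $\nabla_H u$ along a horizontal path from $0$ to $h$ and using left-invariance of the fields), and interpolation to reach all $r<p^*$.

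The one place where your route genuinely diverges from what the cited source does is the reduction to the whole space via a bounded extension operator $E:HW^{1,p}(\Omega)\to HW^{1,p}(\mathbb{H}^N)$. In the Heisenberg group this step is more delicate than in $\mathbb{R}^N$: a Euclidean-smooth boundary has characteristic points, and the existence of an extension operator for horizontal Sobolev spaces requires the domain to be uniform/NTA in the Carnot--Carath\'eodory metric (this is known for $C^{1,1}$ domains by Capogna--Garofalo and Monti--Morbidelli, with the extension theorem due to Garofalo--Nhieu, but it is a nontrivial input you are silently assuming). The Haj\l asz--Koskela theorem the paper cites avoids extension entirely: it proves the Sobolev--Poincar\'e embedding and the compactness directly on John domains from the doubling property and the $(1,p)$-Poincar\'e inequality, via a chaining/maximal-function argument in the metric-measure setting. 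So your proof is correct in substance but rests on a stronger (though available) geometric hypothesis at the extension step; if you want to match the generality of the cited result, replace the extension argument by the intrinsic Poincar\'e-inequality route, or at least state explicitly that $\Omega$ is an $HW^{1,p}$-extension domain. Note also that everywhere this lemma is actually used in the paper the functions lie in $HW_0^{1,p}(\Omega)$, where extension by zero is free and the issue disappears.
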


Taking into account Lemmas~\ref{l2.3} and~\ref{emb}, for a given $\alpha>0$, we introduce the following norm $\|\cdot\|$ on $HW_0^{1,p}(\Omega)$, which will be used throughout the rest of the paper:
\begin{equation}\label{eqnorm}
\|u\|_{HW_0^{1,p}(\Omega)}
:= \left(
\int_{\Omega} |\nabla_H u|^p \, dx
+ \alpha \int_{\mathbb{H}^N} \int_{\mathbb{H}^N} 
\frac{|u(x)-u(y)|^p}{|y^{-1}\circ x|^{Q+sp}} \, dx\, dy
\right)^{1/p}.
\end{equation}

\subsection{Notations and assumptions}
We fix the following notations and assumptions, which will be used throughout the paper unless otherwise specified:

\begin{itemize}
    \item For $l > 1$, the conjugate exponent is denoted by $l' = \frac{l}{l-1}$.

    \item We assume $\alpha>0$, $0 < s < 1 < p < Q$, and $\delta \in (0,1)$, where $Q = 2N+2$ is the homogeneous dimension of $\mathbb{H}^N$. The critical Sobolev exponent is
    \[
    p^* = \frac{Qp}{Q - p}, \qquad 1 < p < Q.
    \]

    \item For $q > 1$, we define
    \[
    J_q(t) = |t|^{q-2} t \quad \text{for all } t \in \mathbb{R}.
    \]

    \item We use the notation
    \[
    d\mu = \frac{dx\, dy}{|y^{-1} \circ x|^{Q + sp}}.
    \]

    \item $\Omega$ denotes a bounded smooth domain in $\mathbb{H}^N$.

    \item For $\omega \Subset \Omega$, we mean that $\omega$ is compactly contained in $\Omega$, i.e.,
    \[
    \omega \subset \overline{\omega} \subset \Omega.
    \]

    \item For $k \in \mathbb{R}$, we adopt the standard notation
    \[
    k^+ = \max\{k,0\}, \qquad 
    k^- = \max\{-k,0\}, \qquad 
    k_- = \min\{k,0\}.
    \]

    \item For a measurable function $F$ on a set $S$ and constants $c,d$, the notation $c \le F \le d$ in $S$ means
    \[
    c \le F(x) \le d \quad \text{for almost every } x \in S.
    \]

    \item The symbol $C$ denotes a generic positive constant, which may change from line to line. If the constant depends on parameters $r_1, r_2, \ldots, r_k$, we write
    \[
    C = C(r_1, r_2, \ldots, r_k).
    \]
\end{itemize}

Before stating our main results, we mention some auxiliary results below.
\subsection{Auxiliary results}
The following result is a direct consequence of \cite[Lemma 4.4]{G1}.

\begin{Lemma}\label{auxresult}
Let $g \in L^{\infty}(\Omega) \setminus \{0\}$ be a nonnegative function in $\Omega$.  
Then there exists a unique solution 
\[
u \in HW_0^{1,p}(\Omega) \cap L^{\infty}(\Omega)
\] 
to the problem
\begin{equation}\label{approxnew}
M_\alpha u = g \quad \text{in } \Omega, \qquad
u > 0 \ \text{in } \Omega, \qquad
u = 0 \ \text{in } \mathbb{H}^N \setminus \Omega.
\end{equation}
Moreover, for every $\omega \Subset \Omega$, there exists a constant $C(\omega) > 0$ such that 
\[
u \ge C(\omega) \quad \text{in } \omega.
\]
\end{Lemma}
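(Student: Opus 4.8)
The statement is, modulo wording, \cite[Lemma 4.4]{G1}; I sketch here the structure of a self-contained argument. \emph{Existence and uniqueness.} The plan is to realize $u$ as the unique minimizer on $HW_0^{1,p}(\Omega)$ of the functional
\[
J(v) = \frac{1}{p}\,\|v\|_{HW_0^{1,p}(\Omega)}^{p} - \int_{\Omega} g\, v\, dx ,
\]
with $\|\cdot\|_{HW_0^{1,p}(\Omega)}$ as in \eqref{eqnorm}. Since $\Omega$ is bounded and $g\in L^{\infty}(\Omega)$, we have $g\in L^{(p^*)'}(\Omega)$, so by the Poincar\'e inequality and Lemma~\ref{emb} the linear term is bounded by $C\,\|g\|_{(p^*)'}\,\|v\|_{HW_0^{1,p}(\Omega)}$; combined with Lemma~\ref{l2.3} this yields coercivity, while weak lower semicontinuity and strict convexity on the reflexive space $HW_0^{1,p}(\Omega)$ are immediate. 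Hence $J$ admits a unique minimizer $u$, which is precisely the unique weak solution of $M_\alpha u = g$ in $\Omega$ with $u=0$ in $\mathbb{H}^N\setminus\Omega$. Since $J(|v|)\le J(v)$ (the horizontal gradient term is unchanged, the Gagliardo term does not increase because $\big||v(x)|-|v(y)|\big|\le|v(x)-v(y)|$, and $\int_\Omega g|v|\ge\int_\Omega g v$ as $g\ge0$), uniqueness forces $u=|u|\ge0$.

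\emph{Boundedness.} To obtain $u\in L^{\infty}(\Omega)$ I would run a De Giorgi/Moser iteration on the level sets of $u$: testing $M_\alpha u = g$ with $(u-k)^+$ for $k>0$, the local part gives $\int_\Omega|\nabla_H(u-k)^+|^p\,dx$, the nonlocal part contributes a nonnegative amount (by monotonicity of $J_p$, after discarding the contributions from points where the test competitor vanishes), and the right-hand side is controlled by $\|g\|_{L^{\infty}(\Omega)}\int_{\{u>k\}}(u-k)^+\,dx$. Feeding this into the Sobolev inequality of Lemma~\ref{emb} and iterating over $k$ gives an estimate of the form $\|u\|_{L^{\infty}(\Omega)}\le C(Q,p,s,\Omega)\,\|g\|_{L^{\infty}(\Omega)}^{1/(p-1)}$.

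\emph{Interior positivity.} Here $u$ is a nonnegative weak supersolution of $M_\alpha u = g\ge0$ which, since $g\not\equiv0$, is not identically zero. Applying the weak Harnack inequality for $M_\alpha$ in the Heisenberg group — obtained by adapting the Euclidean mixed local--nonlocal theory to the sub-Riemannian setting, as in \cite{DC, G1} — and chaining along Carnot--Carath\'eodory balls, one gets for each $\omega\Subset\Omega$ a constant $C(\omega)>0$ with $\essinf_\omega u\ge C(\omega)$; in particular $u>0$ in $\Omega$. The only genuinely technical point is this Harnack / interior minimum estimate: the local and fractional $p$-subLaplacians have different scalings, so the nonlocal tail must be absorbed carefully, and the covering/chaining argument must respect the sub-Riemannian geometry rather than Euclidean balls — which is exactly where the analysis of \cite{G1} is used.
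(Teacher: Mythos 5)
The paper does not actually prove this lemma: it is stated as ``a direct consequence of \cite[Lemma 4.4]{G1}'', so there is no internal argument to compare yours against line by line. Your sketch is the standard route to such a result and is sound in outline: the variational existence/uniqueness step is correct (coercivity from Lemma~\ref{emb} and \eqref{eqnorm}, strict convexity of $\xi\mapsto|\xi|^p$ in the horizontal gradient together with the zero exterior condition gives uniqueness of the minimizer, and the $J(|v|)\le J(v)$ comparison correctly yields $u\ge 0$); the De Giorgi/Stampacchia iteration with $(u-k)^+$ works because $(u-k)^+$ is a nondecreasing function of $u$, so the nonlocal term it generates is pointwise nonnegative and can be discarded, leaving exactly the local level-set estimate; and nonnegativity plus $g\not\equiv 0$ rules out $u\equiv 0$, so a weak Harnack or strong minimum principle for $M_\alpha$ upgrades this to $\essinf_\omega u\ge C(\omega)$. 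The one place where your argument is not self-contained is precisely the last step: the weak Harnack inequality (or interior lower bound) for the mixed operator on $\mathbb{H}^N$ is the genuinely technical content of \cite[Lemma 4.4]{G1}, and you invoke it as a black box just as the paper does. That is acceptable given that the paper itself delegates the entire lemma to \cite{G1}, but be aware that an alternative, more elementary route to the lower bound --- common in this literature --- is a comparison/barrier argument against the solution of $M_\alpha w = c\,\chi_{B}$ on a small ball, combined with the comparison principle for $M_\alpha$, which avoids a full Harnack inequality; either way the sub-Riemannian chaining issue you flag is real and is where the cited work earns its keep.
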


The next result follows from \cite[Lemma 3.1 and Lemma 3.6]{G1}.

\begin{Lemma}\label{approx}
Let $f \in L^{1}(\Omega) \setminus \{0\}$ be a nonnegative function, and for $n \in \mathbb{N}$, define
\[
f_n(x) := \min\{f(x), n\}.
\]
Consider the approximated problem
\begin{equation}\label{approxeqn}
M_\alpha u = f_n(x) \Big(u^{+} + \frac{1}{n}\Big)^{-\delta} \quad \text{in } \Omega, \qquad
u = 0 \quad \text{in } \mathbb{H}^N \setminus \Omega.
\end{equation}
Then, for each $n \in \mathbb{N}$, there exists a unique positive solution 
\[
u_n \in HW_0^{1,p}(\Omega) \cap L^{\infty}(\Omega).
\] 
Furthermore, the sequence $\{u_n\}_{n\in\mathbb{N}}$ is monotone increasing, i.e.,
\[
u_{n+1} \ge u_n \quad \text{in } \Omega \text{ for all } n \in \mathbb{N}.
\]
In addition, for every compact set $\omega \Subset \Omega$, there exists a constant $C(\omega) > 0$, independent of $n$, such that
\[
u_n \ge C(\omega) > 0 \quad \text{in } \omega.
\]
Moreover, if $f\in L^m(\Om)$, where \[
m = \left( \frac{p^*}{1-\delta} \right)',
\] then the sequence $\{u_n\}_{n\in\mathbb{N}}$ is uniformly bounded in $HW_0^{1,p}(\Omega)$.
\end{Lemma}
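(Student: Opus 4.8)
The plan is to treat each $n\in\mathbb{N}$ separately, construct $u_n$ by Schauder's fixed point theorem using Lemma~\ref{auxresult} for the solution map, then read off the $L^\infty$-bound, positivity, uniqueness and the monotonicity $u_{n+1}\ge u_n$ from a weak comparison principle for $M_\al$, and finally obtain the uniform energy bound by testing the equation with $u_n$ and invoking the Sobolev embedding of Lemma~\ref{emb} at the sharp exponent $m=(p^*/(1-\de))'$. For the existence part, write $g_n(x,t):=f_n(x)(t^++1/n)^{-\de}$. Since $f\ge0$, $f\not\equiv0$ and $n\ge1$, for every fixed $w$ the datum $g_n(\cdot,w)$ is measurable, nonnegative, bounded by $n^{1+\de}$ and positive on the set $\{f>0\}$ of positive measure; hence Lemma~\ref{auxresult} assigns to it a unique solution in $HW_0^{1,p}(\Om)\cap L^\infty(\Om)$, which I denote $T(w)$. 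Testing $M_\al T(w)=g_n(\cdot,w)$ with $T(w)$, and using H\"older's inequality together with the continuous embedding $HW_0^{1,p}(\Om)\hookrightarrow L^{p^*}(\Om)$ and the Poincar\'e inequality (Lemmas~\ref{l2.3} and~\ref{emb}), one bounds $\|T(w)\|_{HW_0^{1,p}(\Om)}$ by a constant depending only on $n$ and $\Om$, so $T$ maps a sufficiently large closed ball of $L^p(\Om)$ into itself; that ball is mapped into a bounded subset of $HW_0^{1,p}(\Om)$, relatively compact in $L^p(\Om)$ by the compact embedding of Lemma~\ref{emb}, so $T$ is compact, and $T$ is continuous because $t\mapsto(t^++1/n)^{-\de}$ is bounded and continuous (dominated convergence on the data) and the solution operator of the monotone operator $M_\al$ is continuous. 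Schauder's theorem then produces a fixed point $u_n=T(u_n)$ solving~\eqref{approxeqn} with $u_n\in HW_0^{1,p}(\Om)\cap L^\infty(\Om)$, and since the datum $g_n(\cdot,u_n)$ is nonnegative, bounded and nontrivial, the last assertion of Lemma~\ref{auxresult} gives $u_n>0$ in $\Om$ together with a constant $C_n(\omega)>0$ such that $u_n\ge C_n(\omega)$ on each $\omega\Subset\Om$; in particular $u_n^+=u_n$.

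For uniqueness and monotonicity I would use a weak comparison argument. Given two solutions $u,v$ of~\eqref{approxeqn} (uniqueness), or with $u=u_n$ and $v=u_{n+1}$ where one first notes that $f_{n+1}\ge f_n$ and $1/(n+1)<1/n$ make $u_n$ a weak subsolution of the $(n+1)$-problem (monotonicity), subtract the two (in)equations and test with the admissible function $(u-v)^+\in HW_0^{1,p}(\Om)$. The resulting left-hand side
\begin{multline*}
\int_\Om\big(|\nabla_Hu|^{p-2}\nabla_Hu-|\nabla_Hv|^{p-2}\nabla_Hv\big)\cdot\nabla_H(u-v)^+\,dx\\
+\al\iint_{\mathbb{H}^N\times\mathbb{H}^N}\big(J_p(u(x)-u(y))-J_p(v(x)-v(y))\big)\big((u-v)^+(x)-(u-v)^+(y)\big)\,d\mu
\end{multline*}
is nonnegative by the classical monotonicity inequalities for $J_p$ (pointwise for the local term, and via the standard truncation inequality for the nonlocal one). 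On $\{u>v\}$ one has $u^+\ge v^+$, hence $(u^++c)^{-\de}\le(v^++c)^{-\de}$ for $c\in\{1/n,1/(n+1)\}$, so the right-hand side is $\le0$; therefore the left-hand side vanishes, its local part forces $\nabla_H(u-v)^+\equiv0$, and since $(u-v)^+\in HW_0^{1,p}(\Om)$ vanishes outside $\Om$ we conclude $(u-v)^+\equiv0$. This yields uniqueness (by symmetry) and $u_n\le u_{n+1}$; combining with the bound of Step~1 applied to $u_1$, for every $\omega\Subset\Om$ we obtain $u_n\ge u_1\ge C_1(\omega)=:C(\omega)>0$ in $\omega$, a constant independent of $n$.

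For the uniform energy bound, suppose $f\in L^m(\Om)$ with $m=(p^*/(1-\de))'$, so that $m'=p^*/(1-\de)$ and $(1-\de)m'=p^*$. Testing~\eqref{approxeqn} with $u_n$, using $u_n>0$ together with the elementary inequality $(u_n+1/n)^{-\de}<u_n^{-\de}$, and then H\"older's inequality, one gets
\[
\|u_n\|_{HW_0^{1,p}(\Om)}^p=\int_\Om f_n\,(u_n+1/n)^{-\de}u_n\,dx<\int_\Om f\,u_n^{1-\de}\,dx\le\|f\|_{L^m(\Om)}\,\|u_n\|_{L^{p^*}(\Om)}^{1-\de}.
\]
Since $\|u_n\|_{L^{p^*}(\Om)}\le C\|\nabla_Hu_n\|_{L^p(\Om)}\le C\|u_n\|_{HW_0^{1,p}(\Om)}$ by Lemma~\ref{emb} and the Poincar\'e inequality, this gives $\|u_n\|_{HW_0^{1,p}(\Om)}^{p-1+\de}\le C\|f\|_{L^m(\Om)}$, a bound independent of $n$.

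I expect the weak comparison principle for the mixed operator $M_\al$ used in the uniqueness and monotonicity steps to be the main obstacle: one must control the local and nonlocal contributions simultaneously when testing with a positive part, and correctly combine the monotonicity of $M_\al$ with the non-increasing character of the shifted, sign-truncated singular nonlinearity $t\mapsto f_n(x)(t^++c)^{-\de}$. The remaining ingredients — the Schauder scheme, the $L^\infty$-bound and positivity via Lemma~\ref{auxresult}, and the energy estimate at the sharp exponent $m$ — are then routine.
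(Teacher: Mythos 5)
The paper does not prove this lemma at all: it simply imports it from \cite[Lemma 3.1 and Lemma 3.6]{G1}, so there is no in-paper argument to compare against. Your reconstruction follows the standard Boccardo--Orsina-type scheme that such references use (Schauder fixed point on the solution map furnished by Lemma~\ref{auxresult}, weak comparison for $M_\alpha$ to get uniqueness and monotonicity, the $n$-independent lower bound via $u_n\ge u_1$, and the energy estimate at $m=(p^*/(1-\delta))'$), and it is essentially correct. Two points deserve to be made explicit rather than gestured at: the continuity of $T$ on $L^p(\Omega)$ requires the subsequence-of-subsequences argument (a.e.\ convergence plus the uniform bound $g_n\le n^{1+\delta}$ give $g_n(\cdot,w_k)\to g_n(\cdot,w)$ in $L^{p'}$, and then one passes this through the strictly monotone operator $M_\alpha$); and the nonnegativity of the nonlocal term when testing with $(u-v)^+$ rests on the elementary inequality $\bigl(J_p(a-b)-J_p(c-d)\bigr)\bigl((a-c)^+-(b-d)^+\bigr)\ge 0$, which you should state and justify since it is the crux of the comparison principle for the fractional part of $M_\alpha$. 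With those two details filled in, the proof is complete.
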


Before presenting the next result, we introduce the notion of weak solutions for problem~\eqref{meqn}.  

\begin{Definition}[Weak Solution]\label{wksoldef}
Let $f \in L^1(\Omega) \setminus \{0\}$ be a nonnegative function. A function 
\[
u \in HW_0^{1,p}(\Omega)
\] 
is called a \emph{weak solution} of problem~\eqref{meqn} if for every subset $\omega \Subset \Omega$, there exists a constant $C = C(\omega) > 0$ such that $u \ge C$ in $\omega$, and for every test function $\varphi \in C_c^1(\Omega)$, it holds that
\begin{equation}\label{wksoleqn}
\int_{\Omega} |\nabla_H u|^{p-2} \nabla_H u \cdot \nabla_H \varphi \, dx
+ \alpha \int_{\mathbb{H}^N}\int_{\mathbb{H}^N} J_p(u(x)-u(y)) \, d\mu
= \int_{\Omega} f(x)\, u^{-\delta} \varphi(x) \, dx.
\end{equation}
\end{Definition}

\begin{Remark}\label{Rmwksol}
By density arguments (see \cite[Lemma 5.1]{GU21}), the identity~\eqref{wksoleqn} extends to all $\varphi \in HW_0^{1,p}(\Omega)$. Consequently, following the proof of \cite[Corollary 5.2]{GU21}, one concludes that problem~\eqref{meqn} admits at most one weak solution in $HW_0^{1,p}(\Omega)$ whenever $f \in L^1(\Omega) \setminus \{0\}$.
\end{Remark}

The next result can be found in \cite[Theorem 1.2 and Theorem 1.7]{G1}.

\begin{Theorem}\label{thm1}
Let $f \in L^m(\Omega) \setminus \{0\}$ be nonnegative, where 
\[
m = \left( \frac{p^*}{1-\delta} \right)'.
\] 
Then problem~\eqref{meqn} admits a unique weak solution 
\[
u_\delta \in HW_0^{1,p}(\Omega).
\]
\end{Theorem}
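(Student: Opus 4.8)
The plan is to implement the classical approximation scheme via truncated problems, which is already set up in Lemma~\ref{approx}, and then pass to the limit using the uniform estimates available under the integrability hypothesis $m = \left(\frac{p^*}{1-\delta}\right)'$. First, let $f_n = \min\{f,n\}$ and let $u_n \in HW_0^{1,p}(\Omega) \cap L^\infty(\Omega)$ be the unique positive solution of the approximated problem~\eqref{approxeqn} provided by Lemma~\ref{approx}. By that same lemma, the sequence $\{u_n\}$ is monotone increasing, uniformly bounded below on every $\omega \Subset \Omega$ by a constant $C(\omega)>0$ independent of $n$, and — crucially, because $f \in L^m(\Omega)$ with the stated $m$ — uniformly bounded in $HW_0^{1,p}(\Omega)$.

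Second, I would extract the limit. By the uniform $HW_0^{1,p}(\Omega)$ bound and reflexivity, up to a subsequence $u_n \rightharpoonup u_\delta$ weakly in $HW_0^{1,p}(\Omega)$; by the compact embedding (Lemma~\ref{emb} applied to the local part, together with the compactness of the fractional trace as in \cite{DC}) we get strong $L^r(\Omega)$ convergence for $r < p^*$ and a.e.\ convergence. Monotonicity forces the whole sequence to converge and identifies $u_\delta$ as the pointwise supremum, so $u_\delta \ge C(\omega)$ on each $\omega \Subset \Omega$, which gives the quantitative strict positivity required by Definition~\ref{wksoldef}. The function $u_\delta^{-\delta}$ is then well defined and finite a.e.\ on $\Omega$.

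Third, I would pass to the limit in the weak formulation. Fix $\varphi \in C_c^1(\Omega)$, supported in some $\omega \Subset \Omega$. The standard monotonicity-plus-strong-convergence argument (Minty's trick, or the fact that $u_n$ is an increasing sequence of solutions to monotone problems) upgrades the weak convergence of $\nabla_H u_n$ to the a.e.\ convergence of $\nabla_H u_n$, so that $|\nabla_H u_n|^{p-2}\nabla_H u_n \rightharpoonup |\nabla_H u_\delta|^{p-2}\nabla_H u_\delta$ in $L^{p'}(\Omega)$, handling the local term; the nonlocal term is treated analogously using the uniform bound on $[u_n]_{HW^{s,p}(\mathbb{H}^N)}$ and a.e.\ convergence of the difference quotients together with Vitali/Fatou. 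For the right-hand side, on $\supp\varphi = \omega$ one has $0 \le f_n(x)(u_n^+ + \tfrac1n)^{-\delta}|\varphi| \le f(x) C(\omega)^{-\delta}\|\varphi\|_\infty \in L^1(\Omega)$ since $f \in L^m(\Omega) \subset L^1(\Omega)$, so dominated convergence yields $\int_\Omega f_n u_n^{-\delta}\varphi \to \int_\Omega f u_\delta^{-\delta}\varphi$. This establishes that $u_\delta$ is a weak solution in the sense of Definition~\ref{wksoldef}. Uniqueness is immediate from Remark~\ref{Rmwksol}.

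The main obstacle is the third step: justifying the passage to the limit in the two nonlinear principal-part terms and, simultaneously, controlling the singular right-hand side. These two difficulties are coupled, since the domination of $f_n u_n^{-\delta}\varphi$ relies precisely on the uniform interior lower bound $u_n \ge C(\omega)$, while the convergence of the gradient terms relies on the monotone structure of the approximating sequence; getting both to work with the single integrability threshold $m = \left(\frac{p^*}{1-\delta}\right)'$ (which is exactly what makes the $HW_0^{1,p}$ bound uniform, via a Hölder/Sobolev estimate testing~\eqref{approxeqn} with $u_n$ itself) is the technical heart of the argument. Since all of these ingredients are quoted from \cite{G1} through Lemma~\ref{approx}, the proof here reduces to assembling them.
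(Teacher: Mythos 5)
Your proposal is correct and follows essentially the approach the paper relies on: the paper does not prove Theorem~\ref{thm1} itself but imports it from \cite[Theorems 1.2 and 1.7]{G1}, and the monotone approximation scheme you assemble (the solutions $u_n$ of \eqref{approxeqn} from Lemma~\ref{approx}, their pointwise limit $u_\delta$, the uniform $HW_0^{1,p}(\Omega)$ bound under $f\in L^m(\Omega)$, the uniform interior lower bound $u_n\ge C(\omega)$ to dominate the singular right-hand side, and uniqueness via Remark~\ref{Rmwksol}) is exactly the mechanism the paper presupposes at the opening of Section~2. The only step left at sketch level is the a.e.\ convergence of the horizontal gradients needed to pass to the limit in the principal part, which is the standard monotonicity argument you correctly single out as the technical heart; this is not a gap in the strategy.
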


We present our main results in the following subsection. The remainder of the paper is organized as follows: In Section 2, we discuss preliminary results, and in Section 3, we prove the main theorems.

\subsection{Main results}

\begin{Theorem}\label{thm5}
Let $f \in L^m(\Omega) \setminus \{0\}$ be nonnegative, where 
\[
m = \Big(\frac{p^*}{1-\delta}\Big)'.
\] 
Let $u_\delta \in HW_0^{1,p}(\Omega)$ denote the weak solution of problem~\eqref{meqn} given by Theorem~\ref{thm1}. Then the following hold:

\begin{enumerate}
\item[(a)] \textbf{(Extremal)}  
\begin{align*}
\Theta(\Omega) = \left(
\int_{\Omega} |\nabla_H u_\delta|^p \, dx
+ \int_{\mathbb{H}^N} \int_{\mathbb{H}^N} |u_\delta(x) - u_\delta(y)|^p \, d\mu
\right)^{\frac{1-\delta-p}{1-\delta}},
\end{align*}
where $\Theta(\Omega)$ is defined in \eqref{Theta}.

\item[(b)] \textbf{(Sobolev-type inequality)}  
For every $v \in HW_0^{1,p}(\Omega)$, the following mixed Sobolev inequality holds:
\begin{equation}\label{inequality2}
C \left( \int_\Omega |v|^{1-\delta} f \, dx \right)^{\frac{p}{1-\delta}}
\le 
\int_{\Omega} |\nabla_H v|^p \, dx
+ \int_{\mathbb{H}^N} \int_{\mathbb{H}^N} |v(x) - v(y)|^p \, d\mu,
\end{equation}
if and only if 
\[
C \le \Theta(\Omega).
\]

\item[(c)] \textbf{(Simplicity)}  
If for some $w \in HW_0^{1,p}(\Omega)$ the equality
\begin{equation}\label{sim}
\Theta(\Omega) \left( \int_\Omega |w|^{1-\delta} f \, dx \right)^{\frac{p}{1-\delta}}
= 
\int_{\Omega} |\nabla_H w|^p \, dx
+ \int_{\mathbb{H}^N} \int_{\mathbb{H}^N} |w(x) - w(y)|^p \, d\mu
\end{equation}
holds, then $w = k u_\delta$ for some constant $k$.
\end{enumerate}
\end{Theorem}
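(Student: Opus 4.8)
The plan is to exploit the scale-invariance structure of the quotient defining $\Theta(\Omega)$ and the characterization of $u_\delta$ as the weak solution of \eqref{meqn}. Write $R(v) := \|v\|_{HW_0^{1,p}(\Omega)}^p$ with $\alpha=1$ in the definition \eqref{eqnorm}, i.e. $R(v) = \int_\Omega |\nabla_H v|^p\,dx + \iint |v(x)-v(y)|^p\,d\mu$, and $L(v) := \int_\Omega |v|^{1-\delta} f\,dx$. Since $R$ is $p$-homogeneous and $L$ is $(1-\delta)$-homogeneous, a standard minimization-with-scaling argument shows that
\[
\Theta(\Omega) = \inf_{v \neq 0, \, L(v)>0} \frac{R(v)}{L(v)^{p/(1-\delta)}}.
\]
For part (a), I would first test the weak formulation \eqref{wksoleqn} of \eqref{meqn} with $\varphi = u_\delta$ itself (legitimate by Remark~\ref{Rmwksol}, since $u_\delta \in HW_0^{1,p}(\Omega)$ and one checks $\int_\Omega f u_\delta^{1-\delta}\,dx < \infty$ using the positivity lower bounds and $f \in L^m$), obtaining the key identity $R(u_\delta) = \int_\Omega f\, u_\delta^{1-\delta}\,dx = L(u_\delta)$. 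Plugging this into the quotient gives $R(u_\delta)/L(u_\delta)^{p/(1-\delta)} = R(u_\delta)^{1 - p/(1-\delta)} = R(u_\delta)^{(1-\delta-p)/(1-\delta)}$, which is exactly the claimed value; so it remains to show $u_\delta$ actually \emph{attains} the infimum $\Theta(\Omega)$.

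To prove attainment, I would follow the approach of \cite{GFA, Nmn, GU21}: let $v$ be any admissible competitor with $L(v) = 1$, and compare. The cleanest route is a convexity/hidden-convexity argument. Using that $v \mapsto \iint |\nabla_H(\cdot)|^p$ and $v\mapsto \iint|v(x)-v(y)|^p\,d\mu$ are convex and that for the singular equation one has an underlying concavity in the variable $t \mapsto t^{1-\delta}$, one sets up the comparison: testing \eqref{wksoleqn} against $(u_\delta^{1-\delta}\, |v|^{1+\delta}/u_\delta^{1-\delta} - \text{lower order})$-type functions, or more directly invoking the convexity inequality for the mixed operator along the curve $\sigma \mapsto ((1-\sigma) u_\delta^{1-\delta} + \sigma |v|^{1-\delta})^{1/(1-\delta)}$ — this is the Díaz–Saá / Benguria–Brezis–Lieb type inequality adapted to the mixed local–nonlocal $p$-operator on $\mathbb{H}^N$. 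One concludes
\[
R(v) \ge \int_\Omega f\, |v|^{1-\delta}\, u_\delta^{-\delta}\, u_\delta \, \text{(weighted)}\,dx \cdot (\text{Hölder with exponents } \tfrac{1}{1-\delta}, \tfrac{1}{\delta}) \ge R(u_\delta)^{(1-\delta-p)/(1-\delta)}\, L(v)^{p/(1-\delta)},
\]
giving $\Theta(\Omega) \ge R(u_\delta)^{(1-\delta-p)/(1-\delta)}$, while the reverse inequality is immediate from $u_\delta$ being admissible after normalization. Part (b) is then a formal consequence: inequality \eqref{inequality2} with constant $C$ holds for all $v$ iff $C \le \inf_{L(v)=1} R(v) = \Theta(\Omega)$, by homogeneity and the very definition of the infimum. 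Part (c) — simplicity — follows by tracking the equality cases in the chain above: equality in the Hölder step forces $|w|^{1-\delta}$ proportional to $u_\delta^{1-\delta}$ a.e. on $\{f>0\}$, hence $|w| = k\,u_\delta$ there; the strict convexity of $R$ (modulo the degeneracy of the $p$-homogeneous terms, handled as in \cite{GU21}) and the equality in \eqref{sim} then upgrade this to $w = k\,u_\delta$ on all of $\Omega$.

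The main obstacle will be establishing the convexity/comparison inequality for the mixed local–nonlocal $p$-subLaplacian $M_\alpha$ on $\mathbb{H}^N$ with the singular right-hand side: one must justify using test functions of the form $\varphi = (\text{ratio of powers of } u_\delta \text{ and } v)$ in \eqref{wksoleqn}, which are not smooth and may be singular where $u_\delta$ is small, so a truncation/approximation argument (via the sequence $u_n$ of Lemma~\ref{approx}, for which everything is bounded and bounded below) is needed, passing to the limit using the monotone convergence $u_n \uparrow u_\delta$, the uniform $HW_0^{1,p}$ bound, and Fatou's lemma on the nonlocal term. The nonlocal term in particular requires the discrete convexity inequality $J_p(a-b)(A-B) \ge \frac{1}{p}(|A-B|^p - |a-b|^p)$-type estimates applied pointwise in $(x,y)$ and then integrated against $d\mu$, which is where the structure of \cite{GU21} transfers to the Heisenberg setting essentially verbatim once Lemma~\ref{l2.3} and Lemma~\ref{emb} are in hand.
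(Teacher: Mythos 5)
Your skeleton is partly the paper's: testing the weak formulation with $u_\delta$ to obtain $\|u_\delta\|^p=\int_\Omega f\,u_\delta^{1-\delta}\,dx$, reducing (a) to attainment of the infimum, and deducing (b) from (a) by homogeneity and normalization are exactly the steps in the paper. The divergence is in how attainment is proved. The paper does \emph{not} use a D\'iaz--Sa\'a / hidden-convexity comparison. It first shows (Lemma~\ref{minprop}) that $u_\delta$ is a \emph{global minimizer of the free functional} $I_\delta(v)=\tfrac1p\|v\|^p-\tfrac{1}{1-\delta}\int_\Omega (v^+)^{1-\delta}f\,dx$: each approximation $u_n$ is identified, via uniqueness for \eqref{approxeqn}, with the minimizer of the regularized functional $I_n$, and the strong convergence $u_n\to u_\delta$ in $HW^{1,p}_0(\Omega)$ (Lemma~\ref{strong}, resting on the norm monotonicity of Lemma~\ref{lemma1}) passes minimality to the limit. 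Attainment is then the one-line scaling computation $I_\delta(u_\delta)\le I_\delta(\lambda|v|)$ with the optimal $\lambda=\|v\|^{-p/(p+\delta-1)}$ for $v$ in the constraint set. This route never needs singular test functions.

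As written, your attainment step has a genuine gap. The displayed chain $R(v)\ge \int_\Omega f|v|^{1-\delta}u_\delta^{-\delta}u_\delta\,(\cdots)\ge R(u_\delta)^{(1-\delta-p)/(1-\delta)}L(v)^{p/(1-\delta)}$ is not a coherent inequality, and the two facts your argument would actually rest on --- (i) convexity of the mixed local--nonlocal energy along $\sigma\mapsto\bigl((1-\sigma)u_\delta^{1-\delta}+\sigma|v|^{1-\delta}\bigr)^{1/(1-\delta)}$ in the Heisenberg setting, and (ii) admissibility in \eqref{wksoleqn} of the resulting test function $u_\delta^{\delta}\bigl(|v|^{1-\delta}-u_\delta^{1-\delta}\bigr)$, whose horizontal gradient contains $u_\delta^{\delta-1}|v|^{1-\delta}\nabla_H u_\delta$ with no control where $u_\delta$ is small relative to $|v|$ --- are asserted rather than proved; the truncation you allude to is precisely where the work lies, and it is the work the paper's Lemma~\ref{minprop} route avoids. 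For (c), your H\"older-equality analysis only yields $|w|=k\,u_\delta$ on $\{f>0\}$; the paper instead first rules out sign changes (the Gagliardo seminorm strictly decreases under $v\mapsto|v|$ for sign-changing $v$) and then applies strict convexity of $\|\cdot\|^p$ to the midpoint $(v+V_\delta)/2$ to conclude $v=V_\delta$ on all of $\Omega$ --- a step you would still need to supply, since you invoke ``strict convexity of $R$'' without addressing how to pass from proportionality on $\{f>0\}$ to proportionality everywhere.
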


\begin{Corollary}\label{ansiowgtrmk}
Define
\[
S_{\delta} := 
\left\{
v \in HW^{1,p}_0(\Omega) : 
\int_\Omega |v|^{1-\delta} f \, dx = 1
\right\},
\qquad
\zeta_{\delta} := 
\left( \int_\Omega u_\delta^{1-\delta} f \, dx \right)^{-\frac{1}{1-\delta}}.
\]
Then 
\[
V_\delta := \zeta_\delta u_\delta \in S_\delta,
\] 
and from Theorem~\ref{thm5}, we have
\[
\Theta(\Omega) = 
\int_{\Omega} |\nabla_H V_\delta|^p \, dx
+ \int_{\mathbb{H}^N} \int_{\mathbb{H}^N} |V_\delta(x) - V_\delta(y)|^p \, d\mu.
\]
Moreover, $V_\delta$ satisfies the singular problem
\[
M_\alpha V_\delta = \Theta(\Omega) f V_\delta^{-\delta} \quad \text{in } \Omega, \qquad V_\delta > 0 \ \text{in } \Omega.
\]
\end{Corollary}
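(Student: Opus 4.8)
\textbf{Proof proposal for Corollary~\ref{ansiowgtrmk}.}

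The plan is to verify each assertion by directly substituting the scaled function $V_\delta = \zeta_\delta u_\delta$ into the formulas already established in Theorem~\ref{thm5}. First I would check that $V_\delta \in S_\delta$: by definition of $\zeta_\delta$ we have $\int_\Omega |V_\delta|^{1-\delta} f\,dx = \zeta_\delta^{1-\delta}\int_\Omega u_\delta^{1-\delta} f\,dx = 1$, using that $u_\delta > 0$ in $\Omega$ (so $|V_\delta|^{1-\delta} = \zeta_\delta^{1-\delta} u_\delta^{1-\delta}$) and the defining relation $\zeta_\delta^{1-\delta} = \big(\int_\Omega u_\delta^{1-\delta} f\,dx\big)^{-1}$. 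Note here one should remark that $\int_\Omega u_\delta^{1-\delta} f\,dx$ is finite and positive — positivity follows since $f \ge 0$, $f \not\equiv 0$, and $u_\delta \ge C(\omega) > 0$ on compact subsets (Lemma~\ref{approx} / Definition~\ref{wksoldef}), while finiteness follows from $u_\delta \in HW_0^{1,p}(\Omega) \hookrightarrow L^{p^*}(\Omega)$ (Lemma~\ref{emb}) together with H\"older's inequality against $f \in L^m(\Omega)$ with $m = (p^*/(1-\delta))'$, exactly the exponent that makes the pairing integrable.

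Next I would compute the energy of $V_\delta$. By $p$-homogeneity of both the local Dirichlet integral and the Gagliardo-type double integral,
\[
\int_{\Omega} |\nabla_H V_\delta|^p \, dx
+ \int_{\mathbb{H}^N} \int_{\mathbb{H}^N} |V_\delta(x) - V_\delta(y)|^p \, d\mu
= \zeta_\delta^{p}\left(
\int_{\Omega} |\nabla_H u_\delta|^p \, dx
+ \int_{\mathbb{H}^N} \int_{\mathbb{H}^N} |u_\delta(x) - u_\delta(y)|^p \, d\mu
\right).
\]
Writing $E := \int_{\Omega} |\nabla_H u_\delta|^p \, dx + \iint |u_\delta(x)-u_\delta(y)|^p\,d\mu$, part (a) of Theorem~\ref{thm5} gives $\Theta(\Omega) = E^{\frac{1-\delta-p}{1-\delta}}$, while $\zeta_\delta^{1-\delta} = \big(\int_\Omega u_\delta^{1-\delta} f\,dx\big)^{-1}$. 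I still need to relate $\int_\Omega u_\delta^{1-\delta} f\,dx$ to $E$: testing the weak formulation~\eqref{wksoleqn} for $u_\delta$ with the admissible test function $u_\delta$ itself (legitimate by Remark~\ref{Rmwksol}) yields $E = \int_\Omega f u_\delta^{-\delta} u_\delta\,dx = \int_\Omega f u_\delta^{1-\delta}\,dx$. Hence $\zeta_\delta^{1-\delta} = E^{-1}$, so $\zeta_\delta^{p} = E^{-p/(1-\delta)}$, and therefore $\zeta_\delta^p E = E^{1 - p/(1-\delta)} = E^{\frac{1-\delta-p}{1-\delta}} = \Theta(\Omega)$, which is precisely the claimed identity.

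Finally, for the equation satisfied by $V_\delta$: since $u_\delta$ solves $M_\alpha u_\delta = f u_\delta^{-\delta}$ weakly and $M_\alpha$ is positively $(p-1)$-homogeneous (both $-\Delta_{H,p}$ and $(-\Delta_{H,p})^s$ scale this way), multiplying $u_\delta$ by the positive constant $\zeta_\delta$ gives $M_\alpha V_\delta = \zeta_\delta^{p-1} f u_\delta^{-\delta} = \zeta_\delta^{p-1} \zeta_\delta^{\delta} f V_\delta^{-\delta} = \zeta_\delta^{p-1+\delta} f V_\delta^{-\delta}$ in the weak sense; one checks $p - 1 + \delta = \frac{1-\delta-p}{1-\delta}\cdot(1-\delta)\cdot(-1)\cdot(\ldots)$ — more directly, $\zeta_\delta^{p-1+\delta} = (\zeta_\delta^{1-\delta})^{\frac{p-1+\delta}{1-\delta}} = E^{-\frac{p-1+\delta}{1-\delta}} = E^{\frac{1-\delta-p}{1-\delta}} = \Theta(\Omega)$, so $M_\alpha V_\delta = \Theta(\Omega) f V_\delta^{-\delta}$ in $\Omega$ and $V_\delta > 0$ in $\Omega$ since $\zeta_\delta > 0$ and $u_\delta > 0$. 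The only mild subtlety, and the step I would be most careful about, is justifying that $u_\delta$ is itself an admissible test function in~\eqref{wksoleqn} and that all the singular integrals $\int_\Omega f u_\delta^{1-\delta}\,dx$ are finite — but both are covered by Remark~\ref{Rmwksol} and the H\"older argument above, so the corollary is essentially a bookkeeping consequence of Theorem~\ref{thm5}.
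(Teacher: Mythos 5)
Your proposal is correct and follows essentially the same route as the paper: the corollary is obtained by normalizing $u_\delta$, using the identity $\|u_\delta\|^p=\int_\Omega u_\delta^{1-\delta}f\,dx$ (from testing the weak formulation with $u_\delta$, justified by Remark~\ref{Rmwksol}) together with part (a) of Theorem~\ref{thm5}, and the $p$- and $(p-1)$-homogeneity of the energy and of $M_\alpha$. Your exponent bookkeeping ($\zeta_\delta^{p-1+\delta}=E^{\frac{1-\delta-p}{1-\delta}}=\Theta(\Omega)$) matches the paper's computation, so no gaps.
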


\begin{Theorem}\label{Theorem 2.21}
Let $f \in L^1(\Omega) \setminus \{0\}$ be nonnegative.  
Then, for every $v \in HW_0^{1,p}(\Omega)$, the mixed Sobolev inequality~\eqref{inequality2} holds 
if and only if the problem~\eqref{meqn} admits a weak solution in $HW_0^{1,p}(\Omega)$.
\end{Theorem}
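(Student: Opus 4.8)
The assertion is an equivalence, so I would prove the two implications separately. Throughout I use that the right-hand side of~\eqref{inequality2} and $\|v\|_{HW_0^{1,p}(\Omega)}^p$ differ only by the fixed factors $\min\{1,\alpha\}$ and $\max\{1,\alpha\}$, so that~\eqref{inequality2} holding for some $C>0$ is equivalent to the existence of some $C''>0$ with $C''\big(\int_\Omega |v|^{1-\delta}f\,dx\big)^{p/(1-\delta)}\le\|v\|_{HW_0^{1,p}(\Omega)}^p$ for all $v\in HW_0^{1,p}(\Omega)$, i.e.\ to $\Theta(\Omega)>0$.

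\textbf{Existence of a weak solution $\Rightarrow$ \eqref{inequality2}.} Let $u\in HW_0^{1,p}(\Omega)$ be a weak solution of~\eqref{meqn}. Testing~\eqref{wksoleqn} with $\varphi=u$, which is admissible by Remark~\ref{Rmwksol}, and using $J_p(t)\,t=|t|^p$ yields
\[
\|u\|_{HW_0^{1,p}(\Omega)}^p=\int_\Omega f\,u^{1-\delta}\,dx,
\]
a finite and strictly positive quantity: finiteness follows from $u\in HW_0^{1,p}(\Omega)$ together with Lemma~\ref{l2.3}, and positivity from $u\ge C(\omega)>0$ on $\omega\Subset\Omega$ together with $f\ge 0$, $f\not\equiv 0$. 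Fix $v\in HW_0^{1,p}(\Omega)$; replacing $v$ by $|v|$, which does not increase the norm and leaves $\int_\Omega |v|^{1-\delta}f\,dx$ unchanged, we may assume $v\ge 0$. Combining the Picone inequality for the $p$-subLaplacian with its discrete analogue for $(-\Delta_{H,p})^s$ (cf.\ \cite{GUaamp, GarainHein, G1}) applied to the pair $(u,v)$, and using $\phi=v^p/u^{p-1}$ as a test function in the weak formulation, I obtain
\[
\|v\|_{HW_0^{1,p}(\Omega)}^p\ge\int_\Omega |\nabla_H u|^{p-2}\nabla_H u\cdot\nabla_H\phi\,dx+\alpha\iint J_p(u(x)-u(y))\big(\phi(x)-\phi(y)\big)\,d\mu=\int_\Omega f\,\frac{v^p}{u^{p-1+\delta}}\,dx.
\]
Writing $f\,v^{1-\delta}=\big(f\,v^p\,u^{-(p-1+\delta)}\big)^{\frac{1-\delta}{p}}\big(f\,u^{1-\delta}\big)^{\frac{p-1+\delta}{p}}$ and applying Hölder's inequality with exponents $\frac{p}{1-\delta}$ and $\frac{p}{p-1+\delta}$, then using the two displayed identities, gives $\int_\Omega f\,v^{1-\delta}\,dx\le\|v\|_{HW_0^{1,p}(\Omega)}^{1-\delta}\|u\|_{HW_0^{1,p}(\Omega)}^{p-1+\delta}$. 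Raising to the power $\frac{p}{1-\delta}$ yields~\eqref{inequality2} with $C=\big(\|u\|_{HW_0^{1,p}(\Omega)}^p\big)^{\frac{1-\delta-p}{1-\delta}}>0$, which is exactly the extremal value in Theorem~\ref{thm5}(a).

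\textbf{\eqref{inequality2} $\Rightarrow$ existence of a weak solution.} Suppose~\eqref{inequality2} holds with some $C>0$ and let $\{u_n\}_{n\in\mathbb{N}}$ be the monotone increasing sequence of Lemma~\ref{approx}, $u_n\in HW_0^{1,p}(\Omega)\cap L^\infty(\Omega)$ solving~\eqref{approxeqn}. Testing~\eqref{approxeqn} with $u_n$ and using $f_n\le f$ and $(u_n+\tfrac1n)^{-\delta}u_n\le u_n^{1-\delta}$ gives $\|u_n\|_{HW_0^{1,p}(\Omega)}^p\le\int_\Omega f\,u_n^{1-\delta}\,dx$. Inserting this into~\eqref{inequality2} and using the comparability recalled above yields $C'\big(\int_\Omega f\,u_n^{1-\delta}\,dx\big)^{\frac{p}{1-\delta}}\le\int_\Omega f\,u_n^{1-\delta}\,dx$ with $C'=C\min\{1,\alpha\}>0$, whence $\int_\Omega f\,u_n^{1-\delta}\,dx\le (C')^{-\frac{1-\delta}{p-1+\delta}}$, a bound independent of $n$, and therefore $\|u_n\|_{HW_0^{1,p}(\Omega)}$ is uniformly bounded. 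By monotonicity the pointwise limit $u=\lim_n u_n$ belongs to $HW_0^{1,p}(\Omega)$, $u_n\rightharpoonup u$ in $HW_0^{1,p}(\Omega)$, $u_n\to u$ in $L^r(\Omega)$ for $r\in[1,p^*)$ by Lemma~\ref{emb} and a.e.\ in $\Omega$, and $u\ge C(\omega)>0$ in each $\omega\Subset\Omega$. Finally I pass to the limit in~\eqref{approxeqn}: the singular right-hand side converges by dominated convergence on every compact subset of $\Omega$ (using the uniform interior lower bound), while the quasilinear local term and the nonlocal term pass to the limit by the strong convergence $u_n\to u$ in $HW_0^{1,p}(\Omega)$, obtained from the standard monotonicity argument for $M_\alpha$ (cf.\ \cite{G1, GU21}). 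Hence $u$ is a weak solution of~\eqref{meqn}.

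\textbf{Main obstacle.} The two technical cruxes are: the rigorous justification of $\phi=v^p/u^{p-1}$ (equivalently its regularization $v^p/(u+\e)^{p-1}$) as an admissible test function in the weak formulation for an arbitrary $v\in HW_0^{1,p}(\Omega)$, which I would handle by truncating $v$, exploiting the interior positivity $u\ge C(\omega)>0$, and then removing the truncation by monotone and dominated convergence; and the passage to the limit $n\to\infty$ in the nonlinear local and nonlocal principal parts, which requires upgrading the weak convergence of $\{u_n\}$ to a.e.\ convergence of $\nabla_H u_n$ and of the difference quotients $u_n(x)-u_n(y)$. The latter is the bulk of the work, but for this mixed operator on $\mathbb{H}^N$ it is available in~\cite{G1}; the Picone inequalities on $\mathbb{H}^N$ used in the first implication are the algebraic backbone of the whole argument.
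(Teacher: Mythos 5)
Your proposal is correct in substance, and the ``inequality $\Rightarrow$ existence'' half is essentially identical to the paper's: test the approximating problem \eqref{approxeqn} with $u_n$, feed the resulting bound $\|u_n\|^p\le\int_\Omega f\,u_n^{1-\delta}\,dx$ back into \eqref{inequality2} to get a uniform $HW_0^{1,p}(\Omega)$ bound (this is exactly Lemma~\ref{l3.5}), and then pass to the limit as in the proof of Theorem~\ref{thm1}. The converse half, however, is where you diverge. The paper does \emph{not} use Picone: it simply tests \eqref{wksoleqn} with $u$ to get $\|u\|^p=\int_\Omega u^{1-\delta}f\,dx$, then tests with $|v|$ and applies H\"older to the left-hand side to obtain $\int_\Omega |v|\,u^{-\delta}f\,dx\le C\|u\|^{p-1}\|v\|$, and finally writes $|v|^{1-\delta}f=\big(|v|u^{-\delta}f\big)^{1-\delta}\big(u^{1-\delta}f\big)^{\delta}$ and applies H\"older with exponents $\tfrac1{1-\delta}$, $\tfrac1\delta$. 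This needs nothing beyond Remark~\ref{Rmwksol} and two applications of H\"older, whereas your route requires the local and discrete Picone inequalities on $\mathbb{H}^N$ and the (genuinely delicate) admissibility of $v^p/(u+\e)^{p-1}$ as a test function, which you correctly flag as the main obstacle but only sketch. What your extra effort buys is the sharp constant $C=\big(\|u\|^p\big)^{\frac{1-\delta-p}{1-\delta}}=\Theta(\Omega)$ in \eqref{inequality2}, which the paper obtains only in the $f\in L^m(\Omega)$ setting via Theorem~\ref{thm5}(a)--(b); the paper's argument for Theorem~\ref{Theorem 2.21} yields merely \emph{some} positive constant, which is all the equivalence requires. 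Since the statement only asserts that \eqref{inequality2} holds for some $C$, the paper's shortcut is the more economical proof; your version is a valid but heavier alternative, and its full justification would essentially reproduce the Picone machinery of \cite{GU21} in the Heisenberg setting.
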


The following results concern the purely local and purely nonlocal cases.

\begin{Theorem}\label{Theorem 2.22}
Let $f \in L^1(\Omega) \setminus \{0\}$ be nonnegative.  
Then, for every $v \in HW_0^{1,p}(\Omega)$, the local Sobolev inequality
\begin{equation}\label{localSob}
C \left( \int_\Omega |v|^{1-\delta} f \, dx \right)^{\frac{p}{1-\delta}}
\le \int_\Omega |\nabla_H v|^p \, dx,
\end{equation}
holds if and only if the singular $p$-Laplace equation~\eqref{plh} admits a weak solution in the usual Sobolev space $HW_0^{1,p}(\Omega)$.
\end{Theorem}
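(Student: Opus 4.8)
The plan is to follow the proof of Theorem~\ref{Theorem 2.21} with the nonlocal term switched off (formally $\alpha=0$), so that the argument splits into the two implications and the Gagliardo double integral simply drops out everywhere. Throughout, ``weak solution of \eqref{plh}'' means the obvious $\alpha=0$ analogue of Definition~\ref{wksoldef}, and I will use the $\alpha=0$ version of the density extension of Remark~\ref{Rmwksol} (so that the weak identity for \eqref{plh} may be tested against all of $HW_0^{1,p}(\Omega)$).

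For the \emph{sufficiency} direction, suppose \eqref{plh} admits a weak solution $u$ (so $u\in HW_0^{1,p}(\Omega)$, $u>0$, and $u\ge C(\omega)$ on every $\omega\Subset\Omega$). Given $v\in HW_0^{1,p}(\Omega)$ I would first reduce to $v\ge 0$ bounded: replace $v$ by $|v|$ (this does not change either side of \eqref{localSob}), then truncate at height $k$ and let $k\to\infty$ by monotone convergence. For bounded $v\ge 0$ the function $v^{p}/(u+\varepsilon)^{p-1}$ lies in $HW_0^{1,p}(\Omega)$, so I can test \eqref{plh} with it; the horizontal Picone inequality
\[
|\nabla_H u|^{p-2}\nabla_H u\cdot\nabla_H\!\Big(\tfrac{v^{p}}{(u+\varepsilon)^{p-1}}\Big)\ \le\ |\nabla_H v|^{p}\quad\text{a.e. in }\Omega
\]
then yields $\int_\Omega f\,u^{-\delta}\,v^{p}(u+\varepsilon)^{1-p}\,dx\le\int_\Omega|\nabla_H v|^{p}\,dx$, and letting $\varepsilon\downarrow 0$ (monotone convergence again) gives $\int_\Omega f\,u^{1-\delta-p}v^{p}\,dx\le\int_\Omega|\nabla_H v|^{p}\,dx$. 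Next I would write $f\,v^{1-\delta}=\big(f\,u^{1-\delta-p}v^{p}\big)^{\frac{1-\delta}{p}}\big(f\,u^{1-\delta}\big)^{\frac{p-1+\delta}{p}}$ and apply H\"older with exponents $\tfrac{p}{1-\delta}$ and $\tfrac{p}{p-1+\delta}$, using that $0<\int_\Omega f\,u^{1-\delta}\,dx<\infty$ — positive since $f\ge 0$, $f\not\equiv 0$, $u>0$, and finite by the energy identity $\int_\Omega|\nabla_H u|^{p}\,dx=\int_\Omega f\,u^{1-\delta}\,dx$ obtained by testing \eqref{plh} with $u$. Raising the resulting estimate to the power $\tfrac{p}{1-\delta}$ produces \eqref{localSob} with $C=\big(\int_\Omega f\,u^{1-\delta}\,dx\big)^{(1-\delta-p)/(1-\delta)}$, consistently with Theorem~\ref{thm5}(a).

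For the \emph{necessity} direction, assume \eqref{localSob} holds for some $C>0$. I would run the purely local analogue of the approximation scheme \eqref{approxeqn}: solve $-\Delta_{H,p}u_n=f_n(u_n^{+}+\tfrac1n)^{-\delta}$ in $\Omega$ with $f_n=\min\{f,n\}$, obtaining (by the $\alpha=0$ case of Lemma~\ref{approx}, cf.~\cite{G1}) a unique positive, increasing sequence $u_n\in HW_0^{1,p}(\Omega)\cap L^{\infty}(\Omega)$ with $u_n\ge C(\omega)>0$ on each $\omega\Subset\Omega$ uniformly in $n$. Testing with $u_n$ and using $(u_n+\tfrac1n)^{-\delta}u_n\le(u_n+\tfrac1n)^{1-\delta}\le u_n^{1-\delta}+1$ together with $f_n\le f$ gives, for $A_n:=\int_\Omega|\nabla_H u_n|^{p}\,dx$, the self-improving bound $A_n\le (A_n/C)^{(1-\delta)/p}+\|f\|_{L^1(\Omega)}$; since $(1-\delta)/p<1$, this forces $A_n\le M$ with $M$ independent of $n$ — this is exactly where \eqref{localSob} replaces the $L^m$ hypothesis of Lemma~\ref{approx}. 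Hence $u_n\rightharpoonup u$ in $HW_0^{1,p}(\Omega)$ along a subsequence and, by monotonicity, $u_n\uparrow u$ a.e., with $u\in HW_0^{1,p}(\Omega)$, $u\ge C(\omega)>0$ on compacts, and $u\not\equiv 0$. Finally I would pass to the limit in the weak formulation against $\varphi\in C_c^1(\Omega)$: the right-hand side converges by dominated convergence (on $\supp\varphi$ one has $(u_n+\tfrac1n)^{-\delta}\le C(\supp\varphi)^{-\delta}$ and $f_n\le f\in L^1$), and for the left-hand side one upgrades weak to strong $L^p_{\loc}(\Omega)$ convergence of $\nabla_H u_n$ by the usual monotonicity/Leray--Lions argument (test the $u_n$-equation with $(u_n-u)$ times a cutoff, exploiting the local $L^{\infty}$ bound on $f_n(u_n+\tfrac1n)^{-\delta}$), so that $|\nabla_H u_n|^{p-2}\nabla_H u_n\to|\nabla_H u|^{p-2}\nabla_H u$ weakly in $L^{p'}_{\loc}$, enough to pass to the limit against $\nabla_H\varphi$. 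This identifies $u$ as a weak solution of \eqref{plh}.

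The H\"older bookkeeping and the existence/monotonicity of the $u_n$ are routine. I expect the main obstacles to be: (i) justifying the Picone test function up to $\partial\Omega$ where $u$ degenerates, handled by the $\varepsilon$-regularisation and the truncation of $v$ above; and, more substantially, (ii) passing to the limit in the nonlinear operator $-\Delta_{H,p}$ in the necessity part with only an $L^1$ datum $f$ — this relies on the uniform energy bound extracted from \eqref{localSob} via the exponent inequality $(1-\delta)/p<1$, together with the uniform local positivity $u_n\ge C(\omega)$, and then on the strong local convergence of horizontal gradients. (The purely nonlocal statement, and the mixed statement of Theorem~\ref{Theorem 2.21}, follow the same template with $\int_\Omega|\nabla_H\cdot|^{p}\,dx$ replaced by, respectively added to, the Gagliardo energy and with the fractional Picone inequality.)
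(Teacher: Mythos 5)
Your proposal is correct, and its overall architecture (two directions; an approximation scheme plus a uniform energy bound extracted from the inequality for one direction; testing the equation for the other) matches the paper, which proves Theorem~\ref{Theorem 2.22} by invoking the local analogue of Lemma~\ref{approx} from \cite{GUaamp} and repeating the proof of Theorem~\ref{Theorem 2.21}. Your ``inequality $\Rightarrow$ solution'' half is essentially the paper's: your self-improving bound $A_n \le (A_n/C)^{(1-\delta)/p} + \|f\|_{L^1(\Omega)}$ is the same mechanism as Lemma~\ref{l3.5} (the paper's version is marginally cleaner, using $u_n(u_n+\tfrac1n)^{-\delta}\le u_n^{1-\delta}$ to avoid the additive $\|f\|_{L^1}$ term), and the limit passage is deferred to \cite{G1} in the paper just as you sketch it via Leray--Lions. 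Where you genuinely diverge is the ``solution $\Rightarrow$ inequality'' half: the paper simply tests the equation with $|v|$, bounds the operator side by $C\|u\|^{p-1}\|v\|$ via H\"older, and then uses the splitting $|v|^{1-\delta}f = \big(|v|u^{-\delta}f\big)^{1-\delta}\big(u^{1-\delta}f\big)^{\delta}$; you instead test with the Picone quotient $v^p/(u+\varepsilon)^{p-1}$ and invoke the horizontal Picone inequality. Both are valid; your route costs more bookkeeping (truncation of $v$, $\varepsilon$-regularisation, admissibility of the quotient as a test function) but returns the sharp constant $C=\big(\int_\Omega fu^{1-\delta}\,dx\big)^{(1-\delta-p)/(1-\delta)}$, consistent with Theorem~\ref{thm5}(a), whereas the paper's double-H\"older argument is shorter, yields an unspecified constant (which suffices for the stated equivalence), and --- as you implicitly acknowledge --- transfers verbatim to the nonlocal and mixed cases without needing a fractional Picone inequality.
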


\begin{Theorem}\label{Theorem 2.23}
Let $f \in L^1(\Omega) \setminus \{0\}$ be nonnegative.  
Then, for every $v \in HW_0^{s,p}(\Omega)$, the nonlocal Sobolev inequality
\begin{equation}\label{nonlocalSob}
C \left( \int_\Omega |v|^{1-\delta} f \, dx \right)^{\frac{p}{1-\delta}}
\le \int_{\mathbb{H}^N} \int_{\mathbb{H}^N} |v(x)-v(y)|^p \, d\mu,
\end{equation}
holds if and only if the singular fractional $p$-Laplace equation
\begin{equation}\label{pfrach}
(-\Delta_p)^s u = f(x) u^{-\delta} \quad \text{in } \Omega, \qquad
u > 0 \text{ in } \Omega, \qquad
u = 0 \text{ in } \mathbb{H}^N \setminus \Omega
\end{equation}
admits a weak solution in $HW_0^{s,p}(\Omega)$.
\end{Theorem}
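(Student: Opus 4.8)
The plan is to establish Theorem~\ref{Theorem 2.23} by running, in the purely nonlocal setting, the same two–step argument used for the mixed case (Theorem~\ref{Theorem 2.21}): everywhere one replaces the mixed operator $M_\alpha$ by the fractional $p$-subLaplacian $(-\Delta_{H,p})^s$, the mixed norm \eqref{eqnorm} by the Gagliardo seminorm $[v]_{s,p}:=\big(\iint_{\mathbb{H}^N\times\mathbb{H}^N}|v(x)-v(y)|^p\,d\mu\big)^{1/p}$, and Lemmas~\ref{auxresult}, \ref{approx}, Theorem~\ref{thm1} and Remark~\ref{Rmwksol} by their purely nonlocal counterparts (available from the nonlocal scheme of \cite{Nmn} and its Heisenberg version \cite{GarainHein}). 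Since the statement is an equivalence, I would prove the two implications separately.

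For ``a weak solution of \eqref{pfrach} exists $\Rightarrow$ \eqref{nonlocalSob} holds'', I would take a weak solution $u\in HW^{s,p}_0(\Omega)$, extend the weak formulation to test functions in $HW^{s,p}_0(\Omega)$ by density (as in Remark~\ref{Rmwksol}), and test with $u$ to record $[u]_{s,p}^p=\int_\Omega f\,u^{1-\delta}\,dx\in(0,\infty)$. Given $v\in HW^{s,p}_0(\Omega)$, which I may take $\ge 0$ since passing to $|v|$ does not increase $[v]_{s,p}$, I would insert a suitable truncation of $\varphi=v^p/u^{p-1}$ into the weak formulation and apply the discrete Picone inequality for the fractional $p$-subLaplacian,
\[
J_p\big(u(x)-u(y)\big)\Big(\tfrac{v(x)^p}{u(x)^{p-1}}-\tfrac{v(y)^p}{u(y)^{p-1}}\Big)\le |v(x)-v(y)|^p\qquad\mu\text{-a.e.},
\]
to obtain $\int_\Omega f\,v^p\,u^{-(p-1+\delta)}\,dx\le [v]_{s,p}^p$. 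Then, writing $f\,v^{1-\delta}=\big(f\,v^p u^{-(p-1+\delta)}\big)^{\frac{1-\delta}{p}}\big(f\,u^{1-\delta}\big)^{\frac{p-1+\delta}{p}}$ and using Hölder's inequality with conjugate exponents $\tfrac{p}{1-\delta}$ and $\tfrac{p}{p-1+\delta}$, I would get $\int_\Omega f\,v^{1-\delta}\,dx\le [v]_{s,p}^{\,1-\delta}\,[u]_{s,p}^{\,p-1+\delta}$, which after raising to the power $\tfrac{p}{1-\delta}$ is exactly \eqref{nonlocalSob} with $C=[u]_{s,p}^{-p(p-1+\delta)/(1-\delta)}>0$.

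For the converse, assume \eqref{nonlocalSob} holds for some $C>0$. I would set $f_n=\min\{f,n\}$ and let $u_n\in HW^{s,p}_0(\Omega)\cap L^\infty(\Omega)$ be the unique positive, monotone nondecreasing solutions of the nonsingular problems $(-\Delta_{H,p})^s u_n=f_n\,(u_n^+ +\tfrac1n)^{-\delta}$ in $\Omega$, $u_n=0$ in $\mathbb{H}^N\setminus\Omega$, furnished by the nonlocal analogue of Lemma~\ref{approx} (which also gives $u_n\ge C(\omega)>0$ on each $\omega\Subset\Omega$, uniformly in $n$). Testing with $u_n$ and using $(u_n^+ +\tfrac1n)^{-\delta}\le u_n^{-\delta}$ and $f_n\le f$ gives $[u_n]_{s,p}^p\le\int_\Omega f\,u_n^{1-\delta}\,dx$, and \eqref{nonlocalSob} applied to $u_n$ upgrades this to the crucial uniform bound $[u_n]_{s,p}^{\,p-1+\delta}\le C^{-(1-\delta)/p}$. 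I would then extract $u\in HW^{s,p}_0(\Omega)$ with $u_n\rightharpoonup u$ in $HW^{s,p}_0(\Omega)$; monotonicity plus the compact embedding give $u_n\to u$ a.e.\ and in $L^q(\Omega)$ for $q<p_s^*$, and $u\ge C(\omega)>0$ on compacta. Finally I would pass to the limit in the weak formulation against $\varphi\in C_c^1(\Omega)$: the right-hand side converges to $\int_\Omega f\,u^{-\delta}\varphi\,dx$ by dominated convergence, since on $\supp\varphi$ one has $f_n(u_n^+ +\tfrac1n)^{-\delta}|\varphi|\le C(\supp\varphi)^{-\delta}\|\varphi\|_\infty f\in L^1$; and the operator term converges because $\{J_p(u_n(x)-u_n(y))\}$ is bounded in $L^{p'}(d\mu)$ (indeed $\iint|J_p(u_n(x)-u_n(y))|^{p'}\,d\mu=[u_n]_{s,p}^p$) and converges $\mu$-a.e.\ to $J_p(u(x)-u(y))$, hence weakly in $L^{p'}(d\mu)$, while $\varphi(x)-\varphi(y)\in L^p(d\mu)$. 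This shows $u$ is a weak solution of \eqref{pfrach}.

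I expect the main obstacle to be the Picone step in the first implication: justifying that a truncation of $v^p/u^{p-1}$ is an admissible test function and that the truncation parameters may be removed, given that $u$ is bounded below only on compact subsets and may degenerate near $\partial\Omega$, requires the same careful monotone–convergence argument as in Theorem~\ref{Theorem 2.21}, together with a Picone-type inequality for $(-\Delta_{H,p})^s$ on $\mathbb{H}^N$; the remaining ingredients are direct transcriptions of the purely nonlocal machinery of \cite{Nmn, GarainHein}. The purely local statement (Theorem~\ref{Theorem 2.22}) is handled identically, with $(-\Delta_{H,p})^s$ replaced by $-\Delta_{H,p}$ and the discrete Picone inequality by its classical counterpart for the $p$-subLaplacian.
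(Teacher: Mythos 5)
Your proposal is correct, and the ``inequality $\Rightarrow$ existence'' half follows the same path as the paper (uniform bound on the approximating sequence $\{u_n\}$ obtained by testing with $u_n$ and feeding the result back into \eqref{nonlocalSob}, exactly as in Lemma~\ref{l3.5}, followed by the standard limit passage). The difference lies in the converse half. The paper does not use a Picone inequality at all: given a weak solution $u$, it tests once with $u$ to get $[u]^p_{HW^{s,p}(\mathbb{H}^N)}=\int_\Omega u^{1-\delta}f\,dx$, then tests with $|v|$ and applies H\"older to the bilinear form to get $\int_\Omega |v|\,u^{-\delta}f\,dx\le C\,[u]^{p-1}_{HW^{s,p}(\mathbb{H}^N)}[v]_{HW^{s,p}(\mathbb{H}^N)}$, and finally interpolates via $|v|^{1-\delta}f=\big(|v|\,u^{-\delta}f\big)^{1-\delta}\big(u^{1-\delta}f\big)^{\delta}$ with exponents $\tfrac1{1-\delta}$ and $\tfrac1\delta$. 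This needs only the density extension of the class of test functions (Remark~\ref{Rmwksol}) and completely sidesteps the step you yourself identify as the main obstacle, namely the admissibility of truncations of $v^p/u^{p-1}$ near $\partial\Omega$ where $u$ may degenerate. Your Picone route does work --- the discrete Picone inequality holds pointwise for $u+\e$, the $\e$-regularized test function $v^p/(u+\e)^{p-1}$ is admissible for compactly supported $v$, and monotone convergence removes $\e$ --- and it yields the inequality with the explicit constant $C=[u]_{HW^{s,p}(\mathbb{H}^N)}^{-p(p-1+\delta)/(1-\delta)}$, essentially the same constant the paper's computation produces. So the two arguments buy the same conclusion; the paper's is shorter and more elementary, while yours carries over verbatim to situations where one wants the stronger weighted Hardy-type estimate $\int_\Omega f\,v^p u^{-(p-1+\delta)}dx\le [v]^p_{HW^{s,p}(\mathbb{H}^N)}$ as a byproduct.
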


\begin{Remark}\label{rmk1}
The weak solutions of problems~\eqref{plh} and~\eqref{pfrach} are defined analogously to 
\cite[Definition 3.1]{G1} and \cite[Definition 2.3]{GarainHein}, respectively.
\end{Remark}

As a direct consequence of Theorems~\ref{Theorem 2.21},~\ref{Theorem 2.22} and Lemma~\ref{l2.3}, we obtain the following result connecting the problems~\eqref{meqn} and~\eqref{plh}.

\begin{Corollary}\label{Corollary 2.25}
Let $f \in L^1(\Omega) \setminus \{0\}$ be nonnegative.  
Then problem~\eqref{meqn} admits a weak solution in $HW_0^{1,p}(\Omega)$ 
if and only if problem~\eqref{plh} admits a weak solution in the usual Sobolev space $HW_0^{1,p}(\Omega)$.
\end{Corollary}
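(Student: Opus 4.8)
The plan is to chain together Theorem~\ref{Theorem 2.21}, Theorem~\ref{Theorem 2.22}, and Lemma~\ref{l2.3} so that the mixed Sobolev inequality~\eqref{inequality2} serves as the common bridge between the two existence statements. First, suppose problem~\eqref{meqn} admits a weak solution in $HW_0^{1,p}(\Omega)$. By Theorem~\ref{Theorem 2.21}, there exists a constant $C>0$ such that the mixed Sobolev inequality~\eqref{inequality2} holds for every $v \in HW_0^{1,p}(\Omega)$. Now I would invoke Lemma~\ref{l2.3}: for every $v \in HW_0^{1,p}(\Omega)$ one has
\[
\int_{\mathbb{H}^N}\int_{\mathbb{H}^N} |v(x)-v(y)|^p \, d\mu
\le C_0 \int_\Omega |\nabla_H v|^p \, dx,
\]
with $C_0 = C_0(Q,p,s,\Omega)>0$. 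Combining this with~\eqref{inequality2} gives
\[
C \left( \int_\Omega |v|^{1-\delta} f \, dx \right)^{\frac{p}{1-\delta}}
\le (1+C_0) \int_\Omega |\nabla_H v|^p \, dx,
\]
so the purely local Sobolev inequality~\eqref{localSob} holds with constant $C/(1+C_0)>0$. By Theorem~\ref{Theorem 2.22}, this is equivalent to the existence of a weak solution to~\eqref{plh} in $HW_0^{1,p}(\Omega)$.

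For the converse, suppose problem~\eqref{plh} admits a weak solution in $HW_0^{1,p}(\Omega)$. By Theorem~\ref{Theorem 2.22}, the local Sobolev inequality~\eqref{localSob} holds with some constant $C>0$. Since the extra nonlocal term in~\eqref{inequality2} is nonnegative, namely
\[
\int_\Omega |\nabla_H v|^p \, dx
\le \int_\Omega |\nabla_H v|^p \, dx + \int_{\mathbb{H}^N}\int_{\mathbb{H}^N} |v(x)-v(y)|^p \, d\mu,
\]
the same constant $C$ makes the mixed Sobolev inequality~\eqref{inequality2} hold for every $v \in HW_0^{1,p}(\Omega)$. Applying Theorem~\ref{Theorem 2.21} once more, this yields a weak solution of~\eqref{meqn} in $HW_0^{1,p}(\Omega)$, completing the equivalence.

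I do not anticipate a genuine obstacle here, since the statement is explicitly advertised as a direct consequence of the three cited results; the only point requiring a little care is making sure the function space in which $v$ ranges is the same throughout — in both~\eqref{inequality2} and~\eqref{localSob} the competitors lie in $HW_0^{1,p}(\Omega)$ (not the fractional space), so Lemma~\ref{l2.3} applies verbatim and no density or extension argument is needed. One should also note that the direction of the implication in Theorems~\ref{Theorem 2.21} and~\ref{Theorem 2.22} is an "if and only if" at the level of \emph{existence of some admissible constant}, so the fact that the constants change along the way (shrinking by a factor $(1+C_0)^{-1}$ in one direction, unchanged in the other) is immaterial.
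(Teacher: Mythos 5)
Your proof is correct and follows exactly the route the paper intends: the corollary is stated there as a direct consequence of Theorems~\ref{Theorem 2.21} and~\ref{Theorem 2.22} together with Lemma~\ref{l2.3}, and your argument chains these three results with the mixed inequality~\eqref{inequality2} and the local inequality~\eqref{localSob} as the bridge, handling the constants correctly in both directions. No gaps.
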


\section{Preliminaries for the Sobolev-Type Inequality with Extremal}

Throughout this section, we assume $f \in L^m(\Omega) \setminus \{0\}$, where 
\[
m = \left( \frac{p^*}{1 - \delta} \right)',
\] 
unless stated otherwise. Let $u_n \in HW_0^{1,p}(\Omega)$ denote the solution of the approximated problem~\eqref{approxeqn} provided by Lemma~\ref{approx}. By the proof of Theorem~\ref{thm1}, the sequence $\{u_n\}_{n\in\mathbb{N}}$ converges pointwise to a function $u_\delta \in HW_0^{1,p}(\Omega)$, which is the weak solution of problem~\eqref{meqn}, satisfying $u_n \le u_\delta$ in $\Omega$ for all $n \in \mathbb{N}$. In the following, we establish several auxiliary results that will be useful for proving the main results.

\begin{Lemma}\label{lemma1}
Let $n \in \mathbb{N}$. Then, for every $\phi \in HW_0^{1,p}(\Omega)$, the following inequality holds:
\begin{equation}\label{prop1}
\|u_n\|^p \le \|\phi\|^p + p \int_\Omega (u_n - \phi) \, \Big(u_n + \frac{1}{n}\Big)^{-\delta} f_n \, dx.
\end{equation}
Moreover, the sequence $\{\|u_n\|\}_{n \in \mathbb{N}}$ is nondecreasing, i.e.,
\begin{equation}\label{nmon}
\|u_n\| \le \|u_{n+1}\| \quad \text{for every } n \in \mathbb{N}.
\end{equation}
\end{Lemma}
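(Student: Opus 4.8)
The plan is to derive \eqref{prop1} from the weak formulation of the approximated problem~\eqref{approxeqn}, using the convexity inequality for the relevant energy functional, and then obtain \eqref{nmon} as a consequence by testing against $\phi = u_{n+1}$.

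\emph{Step 1: weak formulation.} Since $u_n \in HW_0^{1,p}(\Omega)\cap L^\infty(\Omega)$ solves~\eqref{approxeqn}, for every test function $\psi \in HW_0^{1,p}(\Omega)$ (after the usual density extension, as in Remark~\ref{Rmwksol}) we have
\[
\int_\Omega |\nabla_H u_n|^{p-2}\nabla_H u_n \cdot \nabla_H \psi \, dx
+ \alpha \int_{\mathbb{H}^N}\int_{\mathbb{H}^N} J_p(u_n(x)-u_n(y)) \,(\psi(x)-\psi(y))\, d\mu
= \int_\Omega f_n \Big(u_n + \tfrac1n\Big)^{-\delta} \psi \, dx.
\]
Choosing $\psi = u_n - \phi$ gives
\[
\int_\Omega |\nabla_H u_n|^{p-2}\nabla_H u_n \cdot \nabla_H (u_n-\phi) \, dx
+ \alpha \int_{\mathbb{H}^N}\int_{\mathbb{H}^N} J_p(u_n(x)-u_n(y)) \big((u_n-\phi)(x)-(u_n-\phi)(y)\big)\, d\mu
= \int_\Omega f_n \Big(u_n + \tfrac1n\Big)^{-\delta} (u_n-\phi) \, dx.
\]

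\emph{Step 2: convexity estimates.} I would now bound each term on the left from below. For the local term, the elementary inequality $J_p(a)\cdot(a-b) \ge \tfrac1p|a|^p - \tfrac1p|b|^p$ applied pointwise with $a=\nabla_H u_n$, $b=\nabla_H\phi$ yields
\[
\int_\Omega |\nabla_H u_n|^{p-2}\nabla_H u_n \cdot \nabla_H (u_n-\phi) \, dx
\ge \frac1p\int_\Omega |\nabla_H u_n|^p\,dx - \frac1p\int_\Omega |\nabla_H \phi|^p\,dx.
\]
For the nonlocal term, the same scalar inequality $J_p(A)(A-B)\ge \tfrac1p|A|^p-\tfrac1p|B|^p$ with $A = u_n(x)-u_n(y)$ and $B=\phi(x)-\phi(y)$, integrated against $d\mu$, gives
\[
\alpha \int_{\mathbb{H}^N}\int_{\mathbb{H}^N} J_p(u_n(x)-u_n(y)) \big((u_n-\phi)(x)-(u_n-\phi)(y)\big)\, d\mu
\ge \frac{\alpha}{p}\int_{\mathbb{H}^N}\int_{\mathbb{H}^N}|u_n(x)-u_n(y)|^p\,d\mu - \frac{\alpha}{p}\int_{\mathbb{H}^N}\int_{\mathbb{H}^N}|\phi(x)-\phi(y)|^p\,d\mu.
\]
Adding these two and recalling the definition~\eqref{eqnorm} of $\|\cdot\|$, the left-hand side of the tested identity is at least $\tfrac1p\|u_n\|^p - \tfrac1p\|\phi\|^p$. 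Combining with Step~1 and multiplying by $p$ gives exactly~\eqref{prop1}.

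\emph{Step 3: monotonicity.} To prove~\eqref{nmon}, I would apply~\eqref{prop1} with $\phi = u_{n+1} \in HW_0^{1,p}(\Omega)$, obtaining
\[
\|u_n\|^p \le \|u_{n+1}\|^p + p\int_\Omega (u_n - u_{n+1})\Big(u_n+\tfrac1n\Big)^{-\delta} f_n\,dx.
\]
It remains to show the integral term is nonpositive. By Lemma~\ref{approx}, $u_{n+1}\ge u_n$ in $\Omega$, so $u_n - u_{n+1}\le 0$ a.e.; moreover $f_n\ge 0$ and $\big(u_n+\tfrac1n\big)^{-\delta}\ge 0$ (with $u_n\ge 0$, the quantity is well-defined and finite since we add $\tfrac1n$). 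Hence the integrand is $\le 0$ pointwise, the integral is $\le 0$, and therefore $\|u_n\|^p \le \|u_{n+1}\|^p$, which gives~\eqref{nmon}.

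The only mildly delicate point is the density/extension step used in Step~1 to test the weak formulation of~\eqref{approxeqn} with the $HW_0^{1,p}(\Omega)$-function $u_n-\phi$ rather than a $C_c^1(\Omega)$ test function; this is handled exactly as in Remark~\ref{Rmwksol} (cf.\ \cite[Lemma 5.1]{GU21}), using that $u_n\in L^\infty(\Omega)$ so the right-hand side is integrable against any $HW_0^{1,p}(\Omega)$ function. Everything else is a direct application of the scalar convexity inequality and the monotonicity already recorded in Lemma~\ref{approx}.
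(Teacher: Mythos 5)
Your argument is correct and reaches the paper's inequality \eqref{prop1} exactly, but by a slightly different route. The paper does not test the weak formulation at all: it invokes Lemma~\ref{auxresult} to note that the solution $v$ of $M_\alpha v = f_n(h^++\tfrac1n)^{-\delta}$ minimizes the convex functional
\[
J(\phi)=\frac1p\|\phi\|^p-\int_\Omega f_n\Big(h^++\frac1n\Big)^{-\delta}\phi\,dx,
\]
takes $h=v=u_n$, and rearranges $J(u_n)\le J(\phi)$ to get \eqref{prop1} directly. You instead start from the Euler--Lagrange (weak) formulation of \eqref{approxeqn} tested with $u_n-\phi$ and recover the same inequality via the pointwise subgradient inequality $J_p(a)\cdot(a-b)\ge\frac1p|a|^p-\frac1p|b|^p$ applied to both the local and nonlocal terms. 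Since $J$ is convex, global minimality and the tested Euler--Lagrange identity plus convexity are two faces of the same fact, so the two proofs are logically equivalent; yours is self-contained modulo the density step you flag (which the paper absorbs into the citation of Lemma~\ref{auxresult}), while the paper's is shorter because it outsources the variational characterization. Step~3 (taking $\phi=u_{n+1}$ and using $u_n\le u_{n+1}$ from Lemma~\ref{approx} to discard the nonpositive integral) is identical to the paper's.
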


\begin{proof}
Let $h \in HW_0^{1,p}(\Omega)$. By Lemma~\ref{auxresult}, there exists a unique solution 
\[
v \in HW_0^{1,p}(\Omega)
\] 
to the problem
\[
M_\alpha v = f_n(x) \Big(h^+ + \frac{1}{n}\Big)^{-\delta}, \quad v > 0 \text{ in } \Omega, \quad v = 0 \text{ in } \mathbb{H}^N \setminus \Omega.
\] 
Furthermore, $v$ minimizes the functional $J: HW_0^{1,p}(\Omega) \to \mathbb{R}$ defined by
\[
J(\phi) := \frac{1}{p} \|\phi\|^p - \int_\Omega f_n \Big(h^+ + \frac{1}{n}\Big)^{-\delta} \phi \, dx.
\]

Hence, for every $\phi \in HW_0^{1,p}(\Omega)$, we have $J(v) \le J(\phi)$, which gives
\begin{equation}\label{mineqn}
\frac{1}{p} \|v\|^p - \int_\Omega \frac{f_n}{(h^+ + \frac{1}{n})^\delta} v \, dx
\le 
\frac{1}{p} \|\phi\|^p - \int_\Omega f_n \Big(h^+ + \frac{1}{n}\Big)^{-\delta} \phi \, dx.
\end{equation}

Inequality~\eqref{prop1} follows by taking $v = h = u_n$ in \eqref{mineqn}.  
Next, choosing $\phi = u_{n+1}$ in \eqref{prop1} and using the monotonicity property $u_n \le u_{n+1}$ from Lemma~\ref{approx}, we deduce \eqref{nmon}.
\end{proof}

\begin{Lemma}\label{strong}
Up to a subsequence, the sequence $\{u_n\}_{n\in\mathbb{N}}$ converges strongly to $u_\delta$ in $HW_0^{1,p}(\Omega)$.
\end{Lemma}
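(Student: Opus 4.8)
The plan is to combine the monotone convergence of $\{u_n\}$ with the energy estimate from Lemma~\ref{lemma1} to upgrade weak convergence to strong convergence in $HW_0^{1,p}(\Omega)$. First I would recall from Lemma~\ref{approx} that, under the hypothesis $f\in L^m(\Omega)$ with $m=(p^*/(1-\delta))'$, the sequence $\{u_n\}$ is uniformly bounded in $HW_0^{1,p}(\Omega)$; hence, up to a subsequence, $u_n\rightharpoonup u_\delta$ weakly in $HW_0^{1,p}(\Omega)$ (the weak limit is identified with $u_\delta$ because $u_n\to u_\delta$ pointwise). By weak lower semicontinuity of the norm, $\|u_\delta\|\le\liminf_n\|u_n\|$. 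Since the reflexive norm is uniformly convex, strong convergence will follow once I show $\limsup_n\|u_n\|\le\|u_\delta\|$.

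To obtain the reverse inequality, I would apply \eqref{prop1} with the test function $\phi=u_\delta$:
\[
\|u_n\|^p\le\|u_\delta\|^p+p\int_\Omega(u_n-u_\delta)\Big(u_n+\tfrac1n\Big)^{-\delta}f_n\,dx.
\]
The task is to show the integral term tends to zero (or at least has nonpositive $\limsup$). Since $u_n\le u_\delta$ in $\Omega$ and $u_n\le u_{n+1}$, we have $u_n-u_\delta\le 0$, so the integrand is $\le 0$ once $f_n\ge 0$; hence $\limsup_n\|u_n\|^p\le\|u_\delta\|^p$, giving the result directly. (If one prefers a cleaner argument, the integral can also be shown to converge to zero: $0\le u_\delta-u_n\le u_\delta$, $f_n\le f$, and on any $\omega\Subset\Omega$ the lower bound $u_n\ge C(\omega)$ controls $(u_n+1/n)^{-\delta}\le C(\omega)^{-\delta}$, so dominated convergence applies on $\omega$; the contribution near $\partial\Omega$ is handled via the $L^m$ integrability of $f$ together with the uniform $HW_0^{1,p}$ bound and Hölder, exactly as in the proof of Theorem~\ref{thm1}. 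But the sign argument is shortest.)

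Combining the two inequalities yields $\|u_n\|\to\|u_\delta\|$, and together with $u_n\rightharpoonup u_\delta$ this gives $u_n\to u_\delta$ strongly in $HW_0^{1,p}(\Omega)$ by uniform convexity (equivalently, the Radon--Riesz property of the space). I would remark that the strong convergence is along the same subsequence extracted for weak convergence; since the full sequence $\|u_n\|$ is monotone by \eqref{nmon} and the subsequential limit is $\|u_\delta\|$, the whole sequence of norms in fact converges, and one can upgrade to convergence of the full sequence, though the statement as given only claims a subsequence.

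The main obstacle is purely bookkeeping: ensuring the norm $\|\cdot\|$ in \eqref{eqnorm} is genuinely uniformly convex so that norm convergence plus weak convergence implies strong convergence. This holds because $HW_0^{1,p}(\Omega)$ with the norm \eqref{eqnorm} embeds isometrically into $L^p(\Omega)^{2N}\times L^p(\mathbb{H}^N\times\mathbb{H}^N,d\mu)$ (via $u\mapsto(\nabla_H u,\alpha^{1/p}(u(x)-u(y))))$, a closed subspace of a uniformly convex space for $1<p<\infty$; no separate argument for the gradients versus the Gagliardo part is needed once this is observed. Everything else—identification of the weak limit, the sign of the error term—is immediate from the results already established.
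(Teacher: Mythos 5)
Your proposal is correct and follows essentially the same route as the paper: testing \eqref{prop1} with $\phi=u_\delta$, using $u_n\le u_\delta$ to kill the sign of the error term so that $\|u_n\|\le\|u_\delta\|$, combining with weak lower semicontinuity of the norm along the weakly convergent subsequence, and concluding by uniform convexity. Your additional remarks (identification of the weak limit via pointwise convergence, the Radon--Riesz upgrade, and the monotonicity of $\{\|u_n\|\}$ giving convergence of the full sequence of norms) are correct elaborations of steps the paper leaves implicit.
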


\begin{proof}
Note that $u_n \le u_\delta$. By taking $\phi = u_\delta$ in \eqref{prop1}, we obtain
\[
\|u_n\| \le \|u_\delta\|,
\]
which, together with the monotonicity property \eqref{nmon} from Lemma~\ref{lemma1}, gives
\begin{equation}\label{lim1}
\lim_{n \to \infty} \|u_n\| \le \|u_\delta\|.
\end{equation}

By Lemma~\ref{approx}, the sequence $\{u_n\}_{n\in\mathbb{N}}$ is uniformly bounded in $HW_0^{1,p}(\Omega)$, so up to a subsequence, $u_n \rightharpoonup u_\delta$ weakly in $HW_0^{1,p}(\Omega)$. Consequently,
\begin{equation}\label{lim2}
\|u_\delta\| \le \lim_{n \to \infty} \|u_n\|.
\end{equation}

Combining \eqref{lim1} and \eqref{lim2} and using the uniform convexity of $HW_0^{1,p}(\Omega)$, the strong convergence follows.
\end{proof}

\begin{Lemma}\label{minprop}
Define the functional $I_{\delta} : HW_0^{1,p}(\Omega) \to \mathbb{R}$ by
\[
I_{\delta}(v) := \frac{1}{p} \|v\|^p - \frac{1}{1-\delta} \int_\Omega (v^+)^{1-\delta} f \, dx.
\]
Then $u_\delta$ is a minimizer of $I_\delta$.
\end{Lemma}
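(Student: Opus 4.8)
Here is a proof plan.

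The plan is to obtain $u_\delta$ as a limit of the approximate minimizers $u_n$ and to transfer the minimality of each $u_n$ to $u_\delta$ by passing to the limit. The first step is a reduction to nonnegative competitors: for any $v\in HW_0^{1,p}(\Omega)$ one has $|\nabla_H v^+|\le|\nabla_H v|$ a.e.\ and $|v^+(x)-v^+(y)|\le|v(x)-v(y)|$, hence $\|v^+\|\le\|v\|$, while $\int_\Omega (v^+)^{1-\delta} f\,dx$ is unchanged when $v$ is replaced by $v^+$; thus $I_\delta(v^+)\le I_\delta(v)$, and it suffices to prove $I_\delta(u_\delta)\le I_\delta(\phi)$ for every nonnegative $\phi\in HW_0^{1,p}(\Omega)$. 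Note also that $I_\delta$ is finite on all of $HW_0^{1,p}(\Omega)$: by H\"older's inequality with exponents $\frac{p^*}{1-\delta}$ and $m$ together with the Sobolev embedding $HW_0^{1,p}(\Omega)\hookrightarrow L^{p^*}(\Omega)$ of Lemma~\ref{emb} (and the Poincar\'e inequality behind the norm \eqref{eqnorm}), $\int_\Omega(v^+)^{1-\delta} f\,dx\le\|v^+\|_{L^{p^*}(\Omega)}^{1-\delta}\|f\|_{L^m(\Omega)}<\infty$.

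Next I would upgrade the linearized inequality \eqref{prop1} of Lemma~\ref{lemma1} into a genuine energy comparison for the approximate problem. Fix $\phi\ge0$. Using the concavity of $t\mapsto\frac{t^{1-\delta}}{1-\delta}$ on $(0,\infty)$, i.e.\ the tangent bound $\frac{b^{1-\delta}-a^{1-\delta}}{1-\delta}\le a^{-\delta}(b-a)$ for $a,b>0$, applied pointwise with $a=u_n+\frac1n$ and $b=\phi+\frac1n$ (and using $f_n\ge0$), inequality \eqref{prop1} rearranges to
\[
\frac1p\|u_n\|^p-\frac{1}{1-\delta}\int_\Omega\Big(u_n+\tfrac1n\Big)^{1-\delta}f_n\,dx
\;\le\;
\frac1p\|\phi\|^p-\frac{1}{1-\delta}\int_\Omega\Big(\phi+\tfrac1n\Big)^{1-\delta}f_n\,dx .
\]
In other words, $u_n$ minimizes the natural energy $J_n$ associated with \eqref{approxeqn} among nonnegative functions.

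It remains to let $n\to\infty$. On the left-hand side, $\|u_n\|\to\|u_\delta\|$ by the strong convergence of Lemma~\ref{strong}; moreover $0\le u_n\uparrow u_\delta$ and $f_n\uparrow f$, so $(u_n+\frac1n)^{1-\delta}f_n\to u_\delta^{1-\delta}f$ pointwise with, for $n\ge1$, the bound $(u_n+\frac1n)^{1-\delta}f_n\le(u_\delta+1)^{1-\delta}f\le(u_\delta^{1-\delta}+1)f\in L^1(\Omega)$ (the integrability of $u_\delta^{1-\delta}f$ following again from H\"older and Lemma~\ref{emb}). Dominated convergence then shows the left-hand side tends to $\frac1p\|u_\delta\|^p-\frac{1}{1-\delta}\int_\Omega u_\delta^{1-\delta}f\,dx=I_\delta(u_\delta)$, where we use $u_\delta\ge0$. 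On the right-hand side, $(\phi+\frac1n)^{1-\delta}f_n\to\phi^{1-\delta}f$ pointwise with dominating function $(\phi+1)^{1-\delta}f$, so the right-hand side tends to $I_\delta(\phi)$. This gives $I_\delta(u_\delta)\le I_\delta(\phi)$ and completes the argument. I expect the main obstacle to be precisely this limit passage — producing the $L^1$ dominating functions for the singular terms, which hinges on the a priori integrability $u_\delta^{1-\delta}f\in L^1(\Omega)$ and on the strong $HW_0^{1,p}(\Omega)$-convergence from Lemma~\ref{strong}; the concavity trick that turns the linearized bound \eqref{prop1} into a true energy comparison is the other key ingredient.
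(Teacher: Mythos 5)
Your argument is correct, and the overall architecture (establish an approximate minimality property for $u_n$, then pass to the limit with Lemma~\ref{strong} and dominated convergence) is the same as the paper's. The difference lies in how the approximate minimality is obtained. The paper introduces the functional $I_n(v)=\frac1p\|v\|^p-\int_\Omega G_n(v)f_n\,dx$ with $G_n(t)=\frac{1}{1-\delta}(t^++\frac1n)^{1-\delta}-(\frac1n)^{-\delta}t^-$, produces a minimizer $v_n$ by the direct method, shows $v_n\ge 0$ (whence $v_n$ solves \eqref{approxeqn}), and identifies $v_n=u_n$ through the uniqueness statement of Lemma~\ref{approx}; the inequality $I_n(u_n)\le I_n(v^+)$ then follows, with the extra term in $G_n$ absorbing sign-changing competitors. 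You instead reduce to nonnegative competitors at the level of $I_\delta$ via $\|v^+\|\le\|v\|$, and then upgrade the already-proved variational inequality \eqref{prop1} of Lemma~\ref{lemma1} into the energy comparison
\[
\frac1p\|u_n\|^p-\frac{1}{1-\delta}\int_\Omega\Big(u_n+\tfrac1n\Big)^{1-\delta}f_n\,dx
\le
\frac1p\|\phi\|^p-\frac{1}{1-\delta}\int_\Omega\Big(\phi+\tfrac1n\Big)^{1-\delta}f_n\,dx
\]
by the tangent-line (concavity) bound for $t\mapsto t^{1-\delta}/(1-\delta)$. This buys you an argument that avoids re-running the direct method and the sign analysis for $I_n$, at the cost of relying on \eqref{prop1} (which in the paper is itself derived from the variational characterization of solutions to the auxiliary problem, so the two proofs ultimately draw on the same source). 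The limit passage is handled identically in both, and your explicit dominating functions $(u_\delta+1)^{1-\delta}f$ and $(\phi+1)^{1-\delta}f$, together with the integrability of $u_\delta^{1-\delta}f$ from H\"older and Lemma~\ref{emb}, are exactly what is needed.
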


\begin{proof}
Consider the auxiliary functional $I_n : HW_0^{1,p}(\Omega) \to \mathbb{R}$ given by
\[
I_n(v) := \frac{1}{p} \|v\|^p - \int_\Omega G_n(v) f_n \, dx,
\]
where
\[
G_n(t) := \frac{1}{1-\delta} \left(t^+ + \frac{1}{n}\right)^{1-\delta} - \left(\frac{1}{n}\right)^{-\delta} t^-.
\]
Note that $I_n$ is coercive, bounded from below, and of class $C^1$, so it admits a minimizer $v_n \in HW_0^{1,p}(\Omega)$ such that
\[
\langle I_n'(v_n), \phi \rangle = 0 \quad \text{for all } \phi \in HW_0^{1,p}(\Omega).
\]

Since $I_n(v_n) \le I_n(v_n^+)$, it follows that $v_n \ge 0$ in $\Omega$. Hence, $v_n$ solves the approximated problem~\eqref{approxeqn}. By the uniqueness result in Lemma~\ref{approx}, we have $u_n = v_n$, showing that $u_n$ is a minimizer of $I_n$. Therefore, for all $v \in HW_0^{1,p}(\Omega)$,
\begin{equation}\label{min}
I_n(u_n) \le I_n(v^+).
\end{equation}

Since $u_n \le u_\delta$, the Lebesgue dominated convergence theorem gives
\[
\lim_{n \to \infty} \int_\Omega G_n(u_n) f_n \, dx = \frac{1}{1-\delta} \int_\Omega u_\delta^{1-\delta} f \, dx.
\]
Moreover, by Lemma~\ref{strong},
\[
\lim_{n \to \infty} \|u_n\| = \|u_\delta\|,
\]
so that
\begin{equation}\label{newlim2}
\lim_{n \to \infty} I_n(u_n) = I_\delta(u_\delta).
\end{equation}

Additionally, for any $v \in HW_0^{1,p}(\Omega)$,
\begin{equation}\label{lim3}
\lim_{n \to \infty} \int_\Omega G_n(v^+) f_n \, dx = \frac{1}{1-\delta} \int_\Omega (v^+)^{1-\delta} f \, dx.
\end{equation}

Since $\|v^+\| \le \|v\|$, using \eqref{newlim2} and \eqref{lim3} in \eqref{min}, we conclude that
\[
I_\delta(u_\delta) \le I_\delta(v) \quad \text{for all } v \in HW_0^{1,p}(\Omega).
\]
This completes the proof.
\end{proof}

\begin{Lemma}\label{l3.5}
Suppose $f \in L^{1}(\Omega)\setminus\{0\}$ is nonnegative. Then, the sequence $\{u_n\}$, where $u_n$, $n \in \mathbb{N}$, are solutions of the approximate problem \eqref{approxeqn}, is uniformly bounded in $HW^{1,p}_0(\Omega)$, provided that \eqref{inequality2} holds.
\end{Lemma}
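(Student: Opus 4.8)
The plan is to feed the energy inequality \eqref{prop1} of Lemma~\ref{lemma1}, tested at $\phi = 0$, into the assumed Sobolev-type inequality \eqref{inequality2}, and then to absorb the norm of $u_n$ from the right-hand side using that the singular exponent $1-\delta$ is strictly below $p$. Concretely, I would first observe that Lemma~\ref{lemma1} (hence \eqref{prop1}) is available for merely $f \in L^1(\Omega)\setminus\{0\}$, since its proof only invokes Lemma~\ref{auxresult} for the bounded datum $f_n(h^+ + \tfrac1n)^{-\delta}$ together with the monotonicity from Lemma~\ref{approx}. Choosing $\phi = 0$ in \eqref{prop1}, which is admissible since $0 \in HW_0^{1,p}(\Omega)$, gives
\[
\|u_n\|^p \le p \int_\Omega u_n\Big(u_n + \tfrac1n\Big)^{-\delta} f_n \, dx
\le p \int_\Omega u_n^{1-\delta} f \, dx,
\]
where the second inequality uses $u_n > 0$ in $\Omega$ (so that $(u_n+\tfrac1n)^{-\delta}\le u_n^{-\delta}$ pointwise, hence $u_n(u_n+\tfrac1n)^{-\delta}\le u_n^{1-\delta}$) together with $f_n = \min\{f,n\} \le f$.

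Next I would reconcile the working norm \eqref{eqnorm}, which carries the coefficient $\alpha$ on the nonlocal term, with the right-hand side of \eqref{inequality2}, which carries coefficient $1$: since $\|v\|^p \ge \min\{1,\alpha\}\big(\int_\Omega|\nabla_H v|^p\,dx + \iint |v(x)-v(y)|^p\,d\mu\big)$, the hypothesis \eqref{inequality2} yields a constant $\widetilde C := C\min\{1,\alpha\}>0$ with $\widetilde C\big(\int_\Omega |v|^{1-\delta} f\,dx\big)^{p/(1-\delta)} \le \|v\|^p$ for every $v \in HW_0^{1,p}(\Omega)$, i.e. $\int_\Omega |v|^{1-\delta} f\,dx \le \widetilde C^{-(1-\delta)/p}\|v\|^{1-\delta}$. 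Applying this with $v = u_n$ and combining with the previous display gives $\|u_n\|^p \le p\,\widetilde C^{-(1-\delta)/p}\,\|u_n\|^{1-\delta}$. Since $p - 1 + \delta > 0$ (because $p>1$ and $\delta>0$), dividing through — the case $\|u_n\|=0$ being trivial and in fact excluded by the lower bound $u_n \ge C(\omega)>0$ — produces
\[
\|u_n\| \le \Big(p\,\widetilde C^{-(1-\delta)/p}\Big)^{1/(p-1+\delta)},
\]
a bound independent of $n$, which is exactly the asserted uniform boundedness in $HW_0^{1,p}(\Omega)$.

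The argument is short and the delicate points are bookkeeping rather than conceptual: one must check that \eqref{prop1} holds under the weaker assumption $f \in L^1$ (it does, as noted above), keep track of the factor $\min\{1,\alpha\}$ when passing between $\|\cdot\|$ and the functional in \eqref{inequality2}, and confirm that every quantity in the chain of inequalities is finite — which is immediate since $u_n \in HW_0^{1,p}(\Omega)\cap L^\infty(\Omega)$ and $f_n$ is bounded, while \eqref{inequality2} applied to $u_n$ forces $\int_\Omega u_n^{1-\delta} f\,dx < \infty$. I do not anticipate a genuine obstacle; the essential structural input is simply that $1-\delta < p$, which is precisely what makes the absorption step close.
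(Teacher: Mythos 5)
Your proof is correct and follows essentially the same route as the paper's: bound $\|u_n\|^p$ by $\int_\Omega f_n\,u_n(u_n+\tfrac1n)^{-\delta}\,dx$ (the paper tests the weak formulation of \eqref{approxeqn} directly with $u_n$, while you invoke \eqref{prop1} with $\phi=0$ at the cost of a harmless factor $p$), then apply the assumed inequality \eqref{inequality2} and absorb using $1-\delta<p$. The only other difference is bookkeeping: you reconcile the $\alpha$-weighted norm with the unweighted right-hand side of \eqref{inequality2} via $\min\{1,\alpha\}$, whereas the paper cites Lemma~\ref{l2.3}; both are fine.
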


\begin{proof}
Taking $u_n$ as a test function in the weak formulation of \eqref{approxeqn} and using inequality \eqref{inequality2} along with Lemma~\ref{l2.3}, we obtain
\[
\|u_n\|_p^p 
\le 
\int_{\Omega} f_n(x)\, u_n \left( u_n + \frac{1}{n} \right)^{-\delta} dx
\le
\int_{\Omega} u_n^{\,1-\delta} f \, dx
\le
C \|u_n\|^{\,1-\delta},
\]
for some constant $C>0$ independent of $n$. This gives the desired uniform bound.
\end{proof}

\section{Proof of the Sobolev Type Inequality with Extremal}

\textbf{Proof of Theorem \ref{thm5}:}  
Recall the norm on $HW_0^{1,p}(\Omega)$ defined by  
\[
\|v\| = \left(\int_{\Omega}|\nabla_H v|^p\,dx + \alpha \int_{\mathbb{H}^N} \int_{\mathbb{H}^N} J_p(v(x) - v(y))\,d\mu\right)^{\frac{1}{p}}.
\]

\begin{enumerate}
\item[(a)] To prove part (a), it suffices to show  
\[
\Theta(\Omega) := \inf_{v \in S_\delta} \|v\|^p = \|u_\delta\|^{\frac{p(1-\delta-p)}{1-\delta}},
\]
where  
\[
S_\delta = \left\{ v \in HW_0^{1,p}(\Omega) : \int_\Omega |v|^{1-\delta} f \, dx = 1 \right\}.
\]

Define  
\[
V_\delta = \tau_\delta u_\delta, \quad \text{with} \quad \tau_\delta = \left(\int_\Omega u_\delta^{1-\delta} f \, dx\right)^{-\frac{1}{1-\delta}},
\]
so that $V_\delta \in S_\delta$.  

By Remark~\ref{Rmwksol}, testing \eqref{wksoleqn} with $u_\delta$ yields  
\[
\|u_\delta\|^p = \int_\Omega u_\delta^{1-\delta} f \, dx,
\]
and therefore
\[
\|V_\delta\|^p = \tau_\delta^p \|u_\delta\|^p = \left( \int_\Omega u_\delta^{1-\delta} f \, dx \right)^{-\frac{p}{1-\delta}} \|u_\delta\|^p = \|u_\delta\|^{\frac{p(1-\delta-p)}{1-\delta}}.
\]

For any $v \in S_\delta$, set  
\[
\lambda = \|v\|^{-\frac{p}{p+\delta-1}}.
\]
Since $u_\delta$ minimizes $I_\delta$, we have  
\[
I_\delta(u_\delta) \le I_\delta(\lambda |v|),
\]
which implies  
\[
\|u_\delta\|^{\frac{p(1-\delta-p)}{1-\delta}} \le \|v\|^p.
\]
Taking the infimum over $v \in S_\delta$ gives  
\[
\Theta(\Omega) = \|u_\delta\|^{\frac{p(1-\delta-p)}{1-\delta}}.
\]

\item[(b)] Suppose the mixed Sobolev inequality \eqref{inequality2} holds.  

If $C > \Theta(\Omega)$, then by part (a),  
\[
C \left( \int_\Omega V_\delta^{1-\delta} f \, dx \right)^{\frac{p}{1-\delta}} > \|V_\delta\|^p,
\]
contradicting \eqref{inequality2}.  

Conversely, if $C \le \Theta(\Omega)$, for any $v \in HW_0^{s,p}(\Omega) \setminus \{0\}$, define  
\[
w = \left( \int_\Omega |v|^{1-\delta} f \, dx \right)^{-\frac{1}{1-\delta}} v \in S_\delta.
\]
Then  
\[
C \le \|w\|^p = \left( \int_\Omega |v|^{1-\delta} f \, dx \right)^{-\frac{p}{1-\delta}} \|v\|^p,
\]
which proves \eqref{inequality2}.

\item[(c)] From part (a), $\Theta(\Omega) = \|V_\delta\|^p$.  

Suppose $v \in S_\delta$ also attains this minimum, i.e., $\|v\|^p = \Theta(\Omega)$.  

First, $v$ cannot change sign in $\Omega$, because if it did,  
\[
\||v|\|^p < \|v\|^p,
\]
contradicting the minimality since $|v| \in S_\delta$.  

Without loss of generality, assume $v \ge 0$ and define  
\[
g = \left( \int_\Omega \left( \frac{v}{2} + \frac{V_\delta}{2} \right)^{1-\delta} f \, dx \right)^{\frac{1}{1-\delta}}, \quad
h = \frac{v + V_\delta}{2g} \in S_\delta.
\]

By strict convexity,  
\[
\Theta(\Omega) \le \|h\|^p \le \frac{1}{g^p} \left\| \frac{v + V_\delta}{2} \right\|^p \le \frac{\Theta(\Omega)}{g^p} \le \Theta(\Omega),
\]
forcing $g = 1$ and equality in the convexity inequality. Hence  
\[
v = V_\delta.
\]

Thus, the minimizer is unique up to sign.  

If \eqref{sim} holds for some nonzero $w$, then  
\[
\gamma w \in S_\delta, \quad \gamma = \left( \int_\Omega |w|^{1-\delta} f \, dx \right)^{-\frac{1}{1-\delta}},
\]
which implies  
\[
w = \pm \gamma^{-1} V_\delta = \pm \gamma^{-1} \tau_\delta u_\delta.
\]
\qed
\end{enumerate}
\textbf{Proof of Theorem \ref{Theorem 2.21}:}  
Assume that the inequality \eqref{inequality2} holds. Then, by Lemma \ref{l3.5}, the sequence $\{u_n\}_{n\in\mathbb{N}}$ is uniformly bounded in $HW^{1,p}_{0}(\Omega)$.  
Following the same arguments as in the proof of \cite[Theorem 1.2]{G1}, it follows that the problem \eqref{meqn} admits a weak solution $u \in HW^{1,p}_{0}(\Omega)$.

Conversely, let $u \in HW^{1,p}_{0}(\Omega)$ be a weak solution of \eqref{meqn}.  
By Remark \ref{Rmwksol}, choosing $u$ as a test function in \eqref{meqn} gives
\begin{equation}\label{eq6.11}
\|u\|^{p} = \int_{\Omega} u^{1-\delta} f \, dx. 
\end{equation}

Next, again using Remark \ref{Rmwksol}, take $|v| \in HW^{1,p}_{0}(\Omega)$ as a test function in \eqref{meqn}.  
Applying H\"older's inequality, we obtain
\begin{equation}\label{eq6.12}
\int_{\Omega} |v|\, u^{-\delta} f \, dx \le C \|u\|^{p-1} \|v\|,
\end{equation}
for some constant $C>0$.  

Combining \eqref{eq6.11} and \eqref{eq6.12}, for any $v \in HW^{1,p}_{0}(\Omega)$, we have
\[
\begin{aligned}
\int_{\Omega} |v|^{1-\delta} f \, dx 
&= \int_{\Omega} \big( |v| u^{-\delta} f \big)^{1-\delta} \big( u^{1-\delta} f \big)^{\delta} dx \\
&\le \left( \int_{\Omega} |v| u^{-\delta} f \, dx \right)^{1-\delta} \left( \int_{\Omega} u^{1-\delta} f \, dx \right)^{\delta} \\
&\le C \|u\|^{p+\delta-1} \|v\|^{1-\delta},
\end{aligned}
\]
which establishes the inequality \eqref{inequality2}. \qed

\medskip

\textit{Proof of Theorem \ref{Theorem 2.22}:}  
Using \cite[Lemma 4.2]{GUaamp} and proceeding as in the proof of Theorem \ref{Theorem 2.21}, the result follows. \(\square\)

\medskip

\textit{Proof of Theorem \ref{Theorem 2.23}:}  
Using \cite[Lemma 3.1]{GarainHein} and following the same arguments as in the proof of Theorem \ref{Theorem 2.21}, the result follows. \(\square\)

\section*{Acknowledgment}
This work is supported ANRF Research Grant, File No. ANRF/ECRG/2024/000780/PMS.

\noindent {\textsf{Prashanta Garain\\Department of Mathematical Sciences\\
Indian Institute of Science Education and Research Berhampur\\ Permanent Campus, At/Po:-Laudigam, Dist.-Ganjam\\
Odisha, India-760003
}\\ 
\textsf{e-mail}: pgarain92@gmail.com\\

\end{document}